\newtheorem{definition}{Definition}
\newtheorem{theorem}{Theorem}
\newtheorem{lemma}{Lemma}
\newtheorem{remark}{Remark}
\newcommand{\R}{\mathbb R}
\newcommand{\E}{\mathbb E}
\newcommand{\N}{\mathbb N}
\newcommand{\Sp}{\mathbb S}
\newcommand{\Exp}{\textrm{Exp}}
\newcommand{\Log}{\textrm{Log}}
\newcommand{\Hess}{\textrm{H}}
\newcommand{\CAT}{\textrm{CAT}}
\newcommand{\per}{\mathrm{per}}
\newcommand{\var}{\mathrm{var}}
\newcommand*\diff{\mathop{}\!\mathrm{d}}
\newcommand{\DS}{\displaystyle}
\newenvironment{myproof}[2] {\paragraph{\textsc{Proof of {#1}}.}}{\hfill$\square$\newline}
\begin{document}

\begin{frontmatter}

\title{Geodesically convex $M$-estimation in metric spaces}
\runtitle{Convex $M$-estimation in metric spaces}

\author{Victor-Emmanuel Brunel \thanks{CREST-ENSAE, victor.emmanuel.brunel@ensae.fr}}

\runauthor{V.-E. Brunel}

\setattribute{abstractname}{skip} {{\bf Abstract:} } 
\begin{abstract}
We study the asymptotic properties of geodesically convex $M$-estimation on non-linear spaces. Namely, we prove that under very minimal assumptions besides geodesic convexity of the cost function, one can obtain consistency and asymptotic normality, which are fundamental properties in statistical inference. Our results extend the Euclidean theory of convex $M$-estimation; They also generalize limit theorems on non-linear spaces which, essentially, were only known for barycenters, allowing to consider robust alternatives that are defined through non-smooth $M$-estimation procedures.

\end{abstract}

\begin{keyword}
$M$-estimation, metric spaces, Riemannian manifolds, CAT spaces, geodesic convexity, barycenters, robust location estimation
\end{keyword}

\end{frontmatter}

\section{Preliminaries}

\subsection{Introduction}

The problem of $M$-estimation, or empirical risk minimization, is pervasive to statistics and machine learning. In many problems, one seeks to minimize a function that takes the form of an expectation, i.e., $\Phi(x)=\E[\phi(Z,x)]$, where $Z$ is a random variable with unknown distribution, $x$ is the target variable, which lives in some target space, and $\phi$ can be seen as a cost function. Classification, regression, location estimation, maximum likelihood estimation, are standard instances of such problems. In practice, the distribution of $Z$ is unknown, so the function $\Phi$ cannot be evaluated. Hence, one rather minimizes a proxy of the form $\hat\Phi_n(x)=n^{-1}\sum_{i=1}^n \phi(Z_i,x)$, where $Z_1,\ldots,Z_n$ are available i.i.d. copies of $Z$. This method is called $M$-estimation, or empirical risk minimization. When the target space is Euclidean, a lot of results are available for guarantees of a minimizer $\hat x_n$ of $\hat\Phi_n$, as an estimator for a minimizer $x^*$ of $\Phi$. Asymptotic results (such as consistency and asymptotic normality) are classic and well known, especially under technical smoothness assumptions on the cost function $\phi$ \cite{van1996weak}, most of which can be dropped when the cost function is convex in the target variable \cite{haberman1989concavity,niemiro1992asymptotics}.

In this work, we are interested in the framework in which the target space is not Euclidean, and does not even exhibit any linear structure. Of course, the space needs to be equipped with a metric, at the very least in order to measure the accuracy of an estimation procedure. Thus, we consider a target space that is a metric space, which we denote by $(M,d)$. 

Statistics and machine learning are more and more confronted with data that lie in non-linear spaces: In spatial statistics (e.g., directional data), computational tomography (e.g., data in quotient spaces such as in shape statistics, collected up to rigid transformations), economics (e.g., optimal transport, where data are measures), etc. Moreover, data and/or target parameters that are encoded as very high dimensional vectors may have a much smaller intrinsic dimension: For instance, if they are lying on small dimensional submanifolds of the ambient Euclidean space. In that case, leveraging the possibly non-linear geometry of the problems at hand can be a powerful tool in order to significantly reduce their dimensionality. This is the central motivating idea behind manifold learning \cite{genovese2012minimax,aamari2019nonasymptotic} or generative adversarial networks \cite{schreuder2021statistical}. Even though more and more algorithms are developed to work and, in particular, optimize in non-linear spaces \cite{LimPalfia14,ohtapalfia,zhang2016first,zhang2018towards,antonakopoulos2020online,criscitiello2022accelerated}, still little is understood from a statistical prospective.

A specific instance of $M$-estimation on metric spaces, which has attracted much attention, is that of barycenters. Given $n$ points $x_1,\ldots,x_n\in M$, a barycenter is any minimizer $x\in M$ of the sum of squared distances $\sum_{i=1}^n d(x,x_i)^2$. One can easily check that if $(M,d)$ is a Euclidean or Hilbert space, then the minimizer is unique and it is given by the average of $x_1,\ldots,x_n$. Barycenters extend the notion of average to spaces that lack a linear structure. More generally, given a probability measure $P$ on $M$, one can define a barycenter of $P$ as any minimizer $x\in M$ of $\int_M d(x,z)^2\diff P(z)$, provided that integral is finite for at least one (and hence, for all) value of $x\in M$. Barycenters were initially introduced in statistics by \cite{Frechet48} in the 1940's, and later by \cite{karcher1977Riemannian}, where they were rather called Fréchet means or Karcher means. They were popularized in the fields of shape statistics \cite{kendall2009shape}, optimal transport \cite{agueh2011barycenters,cuturi2014fast,le2017existence,claici2018stochastic,kroshnin2019complexity,kroshnin2021statistical,altschuler2021wasserstein,altschuler2022wasserstein}. and matrix analysis \cite{bhatia2006riemannian,bhatia2009positive,bhatia2019bures}. 
Asymptotic theory is fairly well understood for empirical barycenters in various setups, particularly laws of large numbers \cite{ziezold1977expected,sturm03} and central limit theorems in Riemannian manifolds (it is natural to impose a smooth structure on $M$ in order to derive central limit theorems) \cite{bhattacharya2003large,bhattacharya2005large,kendall2011limit,huckemann2011intrinsic,bhattacharya2017omnibus,eltzner2019smeary,eltzner2019stability}.
A few finite sample guarantees are available, even though the non-asymptotic statistical theory for barycenters is still quite limited \cite{sturm03,Funano10,schotz2019convergence,ahidar20,legouic22}. Asymptotic theorems are also available for more general $M$-estimators on Riemannian manifolds, e.g., $p$-means, where $\mathcal Z=X$ and $\phi(z,x)=d(x,z)^p$, for some $p\geq 1$ \cite{schotz2019convergence}, including geometric medians as a particular case ($p=1$), but only suboptimal rates are known. In particular, the standard $n^{-1/2}$-rate is unknown in general, beyond barycenters. It should be noted that all central limit theorems for barycenters in Riemannian manifolds rely on the smoothness of the cost function $\phi=d^2$ and use standard techniques from smooth $M$-estimation \cite{van1996weak} by performing Taylor expansions in local charts. Moreover, they assume that the data are almost surely in a ball with small radius, which essentially ensures the convexity of the squared distance function $d^2$ in its second variable, even though this synthetic property is not leveraged in this line of work. For instance, asymptotic normality cannot be obtained for geometric medians or other robust alternatives to barycenters, using these techniques.



\subsection{Contributions}

In this work, we prove strong consistency and asymptotic normality of geodesically convex $M$-estimators under very mild assumptions. We recover the central limit theorems proven for barycenters in Riemannian manifolds with small diameter, but we cover a much broader class of location estimators, including robust alternatives to barycenters. Just as in the Euclidean case, ours results convey that convexity is a powerful property, since it yields strong learning guarantees under very little assumptions. 

All our results are asymptotic, but it is worth mentioning that proving learning guarantees in non-linear spaces is a very challenging, ongoing research topic, because it requires non-standard tools from metric and differential geometry. In particular, very few non-asymptotic learning guarantees are available, even for simple estimators such as barycenters, and asymptotic theory is still an active research area in that framework (e.g., understanding non-standard asymptotic rates, a.k.a. sticky and smeary central limit theorems \cite{hotz2013sticky,eltzner2019smeary}). Moreover, asymptotic guarantees such as asymptotic normality provide benchmarks that are also important for a finite sample theory.

\section{Background and notation}

\subsection{Geodesic spaces} \label{sec:geodspaces}
	
Let $(M,d)$ be a complete and separable metric space that satisfies the following property: For all $x,y\in M$, there exists a continuous function $\gamma:[0,1]\to M$ such that $\gamma(0)=x$, $\gamma(1)=y$ and with the property that $d(\gamma(s),\gamma(t))=|s-t|d(x,y)$, for all $s,t\in[0,1]$. Such a function $\gamma$ is called a unit speed geodesic from $x$ to $y$ and it should be thought of as a (not necessarily unique) shortest path from $x$ to $y$. We denote by $\Gamma_{x,y}$ the collection of all such functions. The space $(M,d)$ is then called a geodesic space.
Examples of geodesic spaces include Euclidean spaces, Euclidean spheres, metric graphs, Wasserstein spaces, quotient spaces, etc. \cite{burago2001course,bridson2013metric}. Geodesic spaces allow for a natural extension of the notion of convexity.

\begin{definition}
	A function $f:M\to\R$ is called geodesically convex (convex, for short) if for all $x,y\in M, \gamma\in\Gamma_{x,y}$ and $t\in [0,1]$, $\DS f(\gamma(t))\leq (1-t)f(x)+tf(y)$.
\end{definition}

In this work, we will further assume that $(M,d)$ is a proper space, i.e., all bounded closed sets are compact. This assumption may seem limiting in practice but, at a high level, it essentially only discards infinite dimensional spaces. Indeed, Hopf-Rinow theorem \cite[Chapter 1, Proposition 3.7]{bridson2013metric} asserts that so long as $(M,d)$ is complete and locally compact, then it is proper. For instance, Hopf-Rinow theorem for Riemannian manifolds asserts that any complete (finite dimensional) Riemannian manifold is a proper geodesic metric space \cite[Chapter 7, Theorem 2.8]{doCarmo1992riemannian}.

\subsection{Riemannian manifolds}

A Riemannian manifold $(M,g)$ is a finite dimensional smooth manifold $M$ equipped with a Riemannian metric $g$, i.e., a smooth family of scalar products on tangent spaces. We refer the reader to \cite{doCarmo1992riemannian} and \cite{lee2012smooth,lee2018introduction} for a clear introduction to differential geometry and Riemannian manifolds. The distance inherited on $M$ from the Riemannian metric $g$ will be denoted by $d$. In this work, for simplicity, we only consider Riemannian manifolds without boundary, even though our results extend easily to the case with boundary.

The tangent bundle of $M$ is denoted by $TM$, i.e., $\DS TM=\bigcup_{x\in M} T_xM$ where, for all $x\in M$, $T_xM$ the tangent space to $M$ at $x$. For all $x\in M$, we also let $\Exp_x:T_xM\to M$ and $\Log_x:M\to T_xM$ be the exponential and the logarithmic maps at $x$, respectively. The latter may not be defined on the whole tangent space, but it is always defined at least on a neighborhood of the origin. To provide intuition, at a given point $x\in M$, and for a given vector $u\in T_xM$, $\Exp_x(u)$ is the Riemannian analog of ``$x+u$" in the Euclidean space: From point $x$, add a vector $u$, whose direction indicates in which direction to move, and whose norm (given by the Riemannian metric $g$) indicates at which distance, in order to attain the point denoted by $\Exp_x(u)$. On the opposite, given $x,y\in M$, $\Log_x(y)$ (when it is well-defined) is the Riemannian analog of ``$y-x$", that is, from $x$, which vector in $T_xM$ led to move to $y$. 

For all $x\in M$, we denote by $\langle\cdot,\cdot\rangle_x$ the scalar product on $T_xM$ inherited from $g$ and by $\|\cdot\|_x$ the corresponding norm.

Let $x,y\in M$ and $\gamma\in\Gamma_{x,y}$ be a geodesic (in the sense of Section~\ref{sec:geodspaces}) from $x$ to $y$. If $x$ and $y$ are close enough, $\gamma$ is unique \cite[Proposition 6.11]{lee2018introduction} and in that case, for all $t\in [0,1]$, $\gamma(t)=\Exp_x(t\dot\gamma(0))$, where $\dot\gamma(0)\in T_xM$ is the derivative of $\gamma$ at $t=0$, given by $\dot\gamma(0)=\Log_x(y)$. We denote the parallel transport map from $x$ to $y$ along $\gamma$ as $\pi_{x,y}:T_xM\to T_yM$, without specifying its dependence on $\gamma$ (which is non-ambiguous if $y$ is close enough to $x$). The map $\pi_{x,y}$ is a linear isometry, i.e., $\langle u,v\rangle_x=\langle \pi_{x,y}(u),\pi_{x,y}(v)\rangle_y$, for all $u,v\in T_xM$. Moreover, it holds that $\pi_{x,y}(\dot\gamma(0))=-\dot\gamma(1)$. Intuitively, parallel transport is an important tool that allows to compare vectors from different tangent spaces, along a given path on the manifold. 

For further details and properties on Riemannian manifolds used in the mathematical development of this work, we refer the reader to Appendix~\ref{sec:AppendixRiemann}.

\subsection{$M$-estimation}

Let $\mathcal Z$ be some abstract space, equipped with a $\sigma$-algebra $\mathcal F$ and a probability measure $P$. Let $\phi:\mathcal Z\times M\to\R$ satisfy the following: 
\begin{enumerate}[(i)]
	\item For all $x\in M$, the map $\phi(\cdot,x)$ is measurable and integrable with respect to $P$;
	\item For $P$-almost all $z\in\mathcal Z$, $\phi(z,\cdot)$ is convex. 
\end{enumerate}
Finally, let $Z,Z_1,Z_2,\ldots$ be i.i.d. random variables in $\mathcal Z$ with distribution $P$ and set the functions 
\begin{equation*}
	\Phi(x)=\E[\phi(Z,x)] \quad \mbox{ and } \quad \hat\Phi_n(x)=n^{-1}\sum_{i=1}^n \phi(Z_i,x),
\end{equation*}
for all $x\in M$ and for all positive integers $n$. Finally, we denote by $M^*$ the set of minimizers of $\Phi$. Note that by convexity of $\Phi$, the minimizing set $M^*$ is convex, i.e., for all $x,y\in M^*$ and for all $\gamma\in\Gamma_{x,y}$, $\gamma([0,1])\subseteq M^*$. 


Important examples include the case where $\mathcal Z=M$ and $\phi$ is a function of the metric, which yields location estimation. Specifically, assume that $\mathcal Z=M$ and that $\phi(z,x)=\ell(d(z,x))$, for all $z,x\in M$, where $\ell:[0,\infty)\to [0,\infty)$ is a non-decreasing, convex function. Then, a natural framework to ensure convexity of the functions $\phi(z,\cdot)$ is that of spaces with curvature upper bounds, a.k.a. CAT spaces. Here, we only give an intuitive definition on CAT spaces. We refer the reader to Appendix~\ref{sec:AppendCAT} for a more detailed account on CAT spaces. 

Fix some real number $\kappa$. We say that $(M,d)$ is $\CAT(\kappa)$ (or that it has global curvature upper bound $\kappa$) if all small enough triangles in $M$ are thinner than what they would be in a space of constant curvature $\kappa$, i.e., a sphere if $\kappa>0$, a Euclidean space if $\kappa=0$ and a hyperbolic space if $\kappa<0$. This setup is very natural to grasp the convexity of the distance function to a point. Fix $x_0\in M$. The function $d(\cdot,x_0)$ is convex if and only if for all $y,z\in M$ and all $\gamma\in \Gamma_{y,z}$, $d(\gamma(t),x_0)\leq (1-t)d(y,x_0)+td(z,x_0)$, which controls the distance from $x_0$ to any point in the opposite edge to $x_0$ in a triangle with vertices $x_0,y,z$. In other words, it indicates how thin a triangle with vertices $x_0,y,z$ must be. 

Hence, if $\mathcal Z=M$ is $\CAT(\kappa)$ and has diameter at most $D_\kappa$ (see Appendix~\ref{sec:AppendCAT} for its definition), \cite[Proposition 3.1]{Ohtaconvexity} guarantees that for all $z\in M$, $d(z,\cdot)$ is convex on $M$ and hence, $\phi(z,\cdot)=\ell(\phi(z,\cdot))$ is also convex as soon as $\ell:[0,\infty)\to[0,\infty)$ is non-decreasing and convex itself. In that setup, important examples of functions $\ell$ include:

\begin{enumerate}[(i)]
	\item $\ell(u)=u^2$: Then, a minimizer of $\Phi$ is a barycenter of $P$ and a minimizer of $\hat\Phi_n$ is an empirical barycenter of $Z_1,\ldots,Z_n$
	\item $\ell(u)=u$: Then, a minimizer of $\Phi$ is a geometric median of $P$
	\item More generally, if $\ell(u)=u^p, p\geq 1$, a minimizer of $\Phi$ is called a $p$-mean of $P$
	\item $\ell(u)=u^2$ if $0\leq u\leq c$, $\ell(u)=c(2u-c)$ if $u>c$, where $c>0$ is a fixed parameter. This function was introduced by Huber \cite{huber1992robust} in order to produce low-bias robust estimators of the mean: It provides an interpolation between barycenter and geometric median estimation.
\end{enumerate}

Important examples of CAT spaces include Euclidean spaces, spheres, simply connected Riemannian manifolds with bounded sectional curvature, metric trees, etc. In Appendix~\ref{sec:AppendCAT}, we provide a more detailed list of examples.

\section{Consistency}

This section contains our first main results on the consistency of geodesically convex $M$-estimators. Here, we work under two scenarios. First, we assume that $\phi(z,\cdot)$ is convex and Lipschitz with some constant that does not depend on $z\in\mathcal Z$. This is somewhat restrictive, even though it covers most of the cases mentioned above for location estimation. In the second scenario, we only assume convexity, but we consider the case where $M$ is a Riemannian manifold. We then discuss possible extensions of our findings.

\subsection{Lipschitz case}

Here, we assume that there exists $L>0$ such that $\phi(z,\cdot)$ is $L$-Lipschitz on $M$, for all $z\in\mathcal Z$. For instance, this is satisfied if $M$ has bounded diameter and $\phi=\ell\circ d$, for some locally Lipschitz function $\ell:[0,\infty)$, such as the functions mentioned above for location estimation.

Recall that we denote by $M^*$ the set of minimizers of $\Phi$. The following theorem need not require that $\Phi$ has a unique minimizer: It asserts that any minimizer of $\Phi_n$ will eventually become arbitrarily close to $M^*$ with probability $1$.

\begin{theorem} \label{thm:consistency}
	Let $M$ be a proper geodesic metric space. Assume that $M^*$ is non-empty and bounded. For $n\geq 1$, let $\hat x_n$ be a minimizer of $\hat\Phi_n$. Then, it holds that $d(\hat x_n,M^*)\xrightarrow[n\to\infty]{} 0$ almost surely. 
\end{theorem}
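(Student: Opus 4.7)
The plan is to combine uniform convergence of $\hat\Phi_n$ to $\Phi$ on compact subsets of $M$ with a convexity-based confinement argument that keeps the empirical minimizers inside a fixed bounded region. First, since $(M,d)$ is separable, I would pick a countable dense set $D\subset M$, apply the strong law of large numbers pointwise on $D$ (using integrability assumption (i)), and intersect over $D$ to obtain a single full-probability event on which $\hat\Phi_n(x)\to\Phi(x)$ for every $x\in D$. The uniform Lipschitz hypothesis transfers directly to $\hat\Phi_n$ and $\Phi$, making them all $L$-Lipschitz, so a routine approximation argument based on total boundedness upgrades pointwise convergence on $D$ to uniform convergence on every compact subset of $M$ on this same event.

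Next, I would fix $x^*\in M^*$, write $\Phi^*=\Phi(x^*)$, and choose $R>0$ so large that $M^*\subset B(x^*,R/2)$. Properness of $(M,d)$ makes the sphere $S_R=\{y:d(x^*,y)=R\}$ compact, $L$-Lipschitzness of $\Phi$ makes $\alpha:=\min_{y\in S_R}\Phi(y)-\Phi^*$ a well-defined real number, and $M^*\cap S_R=\emptyset$ makes it strictly positive. On the event of compact-uniform convergence, eventually $\sup_{\overline{B(x^*,R)}}|\hat\Phi_n-\Phi|\leq\alpha/3$. For such $n$, I would argue by contradiction that $\hat x_n\in\overline{B(x^*,R)}$: otherwise any geodesic $\gamma\in\Gamma_{x^*,\hat x_n}$ would cross $S_R$ at the point $y=\gamma(R/d(x^*,\hat x_n))$, and geodesic convexity of $\hat\Phi_n$ together with the minimality inequality $\hat\Phi_n(\hat x_n)\leq\hat\Phi_n(x^*)$ would yield $\hat\Phi_n(y)\leq\hat\Phi_n(x^*)\leq\Phi^*+\alpha/3$, which contradicts the lower bound $\hat\Phi_n(y)\geq\Phi(y)-\alpha/3\geq\Phi^*+2\alpha/3$ obtained from the uniform estimate on the compact ball.

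Once the minimizers are trapped in the compact ball $\overline{B(x^*,R)}$, the conclusion follows the classical Euclidean template: uniform convergence on this ball gives $|\hat\Phi_n(\hat x_n)-\Phi(\hat x_n)|\to 0$, and the sandwich $\Phi^*\leq\Phi(\hat x_n)\leq\hat\Phi_n(\hat x_n)+o(1)\leq\hat\Phi_n(x^*)+o(1)\to\Phi^*$ forces $\Phi(\hat x_n)\to\Phi^*$. Compactness of the ball combined with continuity of $\Phi$ then forces every subsequential limit of $\hat x_n$ to lie in $M^*$, so $d(\hat x_n,M^*)\to 0$ almost surely.

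I expect the confinement step to be the main obstacle: without it, compact-uniform convergence of $\hat\Phi_n$ to $\Phi$ says nothing about $\hat x_n$ itself, since $M$ need not be compact. This step is precisely where geodesic convexity and the boundedness of $M^*$ are used in tandem, and it is also the feature that lets this argument avoid the smoothness assumptions that underpin the classical Riemannian $M$-estimation proofs based on Taylor expansions in local charts.
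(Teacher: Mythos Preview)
Your argument is correct and follows essentially the same approach as the paper: both combine the Lipschitz assumption with the strong law on a countable dense set to get compact-uniform convergence, and both use a geodesic-crossing/convexity contradiction to confine $\hat x_n$. The only cosmetic difference is that the paper applies the confinement argument directly to the level set $K_\varepsilon=\{x:d(x,M^*)=\varepsilon\}$ for arbitrary $\varepsilon>0$, thereby obtaining $d(\hat x_n,M^*)\leq\varepsilon$ eventually in one stroke and avoiding your final sandwich-plus-subsequence step.
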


A simple adaptation of the proof shows that the same conclusion applies if $\hat x_n$ is not exactly a minimizer of $\hat \Phi_n$ but rather satisfies $\hat\Phi_n(\hat x_n)\leq \min_{x\in M}\hat\Phi_n(x) +\varepsilon_n$, where $\varepsilon_n$ is any non-negative error term that goes to zero almost surely, as $n\to\infty$.

The proof of Theorem~\ref{thm:consistency} is based on the following extension of \cite[Theorem 10.8]{rockafellar1970convex}. The proof is a straightforward adaptation of that of \cite[Theorem 10.8]{rockafellar1970convex}.

\begin{lemma} \label{lemma:Rockafellar}
	Let $L>0$ and $(f_n)_{n\geq 1}$ be a sequence of $L$-Lipschitz convex functions on $M$. Assume that there is a function $f:M\to \R$ and a dense subset $M_0$ of $M$ such that $f_n(x)\xrightarrow[n\to\infty]{} f(x)$, for all $x\in M_0$. Then, $f$ is convex, $L$-Lipschitz, and $f_n$ converges to $f$ uniformly on any compact subset of $M$. 
\end{lemma}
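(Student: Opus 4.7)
The plan is threefold: (i) extend pointwise convergence from $M_0$ to all of $M$ using the uniform Lipschitz constant, (ii) pass to the limit in the Lipschitz and convexity inequalities satisfied by each $f_n$, and (iii) upgrade pointwise convergence to uniform convergence on compact sets via an equicontinuity argument.

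For (i), I fix $x\in M$ and $y\in M_0$. The triangle inequality together with the $L$-Lipschitz property of each $f_n$ yields
\begin{equation*}
|f_n(x)-f_m(x)| \leq 2L\,d(x,y) + |f_n(y)-f_m(y)|.
\end{equation*}
For fixed $y\in M_0$, the second term vanishes as $m,n\to\infty$ by the assumed pointwise convergence on $M_0$; choosing $y\in M_0$ with $d(x,y)$ arbitrarily small, which is possible by density, then forces $(f_n(x))_n$ to be Cauchy in $\R$ and hence convergent. This extends the pointwise limit to all of $M$, and I continue to denote it by $f$.

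For (ii), passing to the pointwise limit in the inequalities
\begin{equation*}
|f_n(x)-f_n(y)|\leq L\,d(x,y), \qquad f_n(\gamma(t))\leq (1-t)f_n(x)+t\,f_n(y),
\end{equation*}
which hold for every $x,y\in M$, $\gamma\in\Gamma_{x,y}$ and $t\in[0,1]$, immediately shows that $f$ is $L$-Lipschitz and geodesically convex.

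For (iii), given a compact $K\subseteq M$ and $\varepsilon>0$, I cover $K$ by finitely many open balls $B(y_i,\varepsilon/(3L))$ with centers $y_i\in M_0$, using compactness of $K$ and density of $M_0$. For $n$ large enough, $|f_n(y_i)-f(y_i)|<\varepsilon/3$ simultaneously for all $i$; for any $x\in K$, selecting $y_i$ with $d(x,y_i)<\varepsilon/(3L)$ and invoking the $L$-Lipschitz property of both $f_n$ and the limit $f$ gives $|f_n(x)-f(x)|\leq \varepsilon$, which is the desired uniform bound. The only mildly delicate step is (i), the propagation of pointwise convergence off the dense set; the remaining steps are routine limit-passing and a standard Arzel\`a--Ascoli-flavored argument exploiting the equicontinuity of $(f_n)_{n\geq 1}$ induced by the uniform Lipschitz constant.
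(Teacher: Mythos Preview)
Your proof is correct and is precisely the ``straightforward adaptation'' of Rockafellar's Theorem~10.8 that the paper invokes without spelling out: the uniform Lipschitz constant gives equicontinuity, the Cauchy argument in (i) propagates pointwise convergence from $M_0$ to all of $M$, and the $\varepsilon/3$ argument in (iii) upgrades to uniform convergence on compacta. One cosmetic remark: as the lemma is phrased, $f$ is already given on all of $M$, so when you extend the pointwise limit and ``continue to denote it by $f$'' you are tacitly identifying the given $f$ with this extension; this is harmless since the intended reading (and the only one under which the conclusion can hold) is that $f$ is determined by its values on $M_0$.
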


This lemma is the reason why, in this section, we require $\phi(z,\cdot)$ to be Lipschitz, with a constant that does not depend on $z\in \mathcal Z$. If $M$ was Euclidean, the Lipschitz assumption of Lemma~\ref{lemma:Rockafellar} would not be necessary because on any compact subset, it would be a consequence of the convexity and pointwise convergence of the functions $f_n, n\geq 1$, see \cite[Theorem 10.6]{rockafellar1970convex}. However, to the best of our knowledge, this may not hold in the more general case that we treat here (and may be subject to a further research question).

As a corollary to Lemma~\ref{lemma:Rockafellar}, we now state the following results, which is essential in our proof of Theorem~\ref{thm:consistency}. The proof is deferred to Appendix~\ref{sec:AppendSeqConvexFns}.

\begin{lemma} \label{lemma:RockafellarRandom}

Let $L>0$ and $(F_n)_{n\geq 1}$ be a sequence of real valued, $L$-Lipschitz, convex random functions on $M$. Let $F$ a (possibly random) function on $M$ and assume that for all $x\in M$, $F_n(x)$ converges almost surely (resp. in probability) to $F(x)$. Then, the convergence holds uniformly on any compact subset $K$ of $M$, i.e., $\sup_{x\in K}|F_n(x)-F(x)|$ converges to zero almost surely (resp. in probability). 

\end{lemma}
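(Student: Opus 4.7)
The plan is to reduce to the deterministic Lemma~\ref{lemma:Rockafellar} by exploiting separability of $(M,d)$ (which was imposed in Section~\ref{sec:geodspaces}) in order to obtain a countable set on which all the pointwise convergences hold simultaneously on a single event of full probability. Uncountability of $M$ is the only obstacle to moving from the random to the deterministic setting, and separability is exactly the tool needed.

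\textbf{Almost sure case.} First, I would fix a countable dense subset $M_0=\{y_j:j\geq 1\}$ of $M$. For each $j\geq 1$, let $\Omega_j$ be the probability-one event on which $F_n(y_j)\to F(y_j)$, and set $\Omega^{*}=\bigcap_{j\geq 1}\Omega_j$, which still has probability one. For every $\omega\in\Omega^{*}$, the deterministic sequence $(F_n(\omega,\cdot))_{n\geq 1}$ consists of $L$-Lipschitz convex functions converging pointwise on the dense set $M_0$, so Lemma~\ref{lemma:Rockafellar} applies: the sequence converges, uniformly on every compact $K\subset M$, to an $L$-Lipschitz convex function $\tilde F(\omega,\cdot)$ that extends the restriction of $F(\omega,\cdot)$ to $M_0$. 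By the hypothesis, for each individual $x\in M$ one has $F_n(x)\to F(x)$ a.s., so $\tilde F(\omega,x)=F(\omega,x)$ on a probability-one event (depending on $x$). Since $\tilde F(\omega,\cdot)$ is continuous and coincides with $F(\omega,\cdot)$ on the dense set $M_0$ for every $\omega\in\Omega^{*}$, the two random functions are almost surely indistinguishable on $K$ (after replacing $F$ with its continuous version, which is legitimate here). This yields $\sup_{x\in K}|F_n(x)-F(x)|\to 0$ almost surely.

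\textbf{Convergence in probability.} Here I would invoke the standard subsequence criterion: it suffices to show that every subsequence $(n_k)_{k\geq 1}$ admits a further subsequence $(n_{k_\ell})_{\ell\geq 1}$ along which $\sup_{x\in K}|F_{n_{k_\ell}}(x)-F(x)|\to 0$ almost surely. For each $j\geq 1$, the assumption $F_n(y_j)\to F(y_j)$ in probability lets us extract an almost surely convergent subsequence, and a diagonal argument along the countable family $M_0=\{y_j:j\geq 1\}$ produces a single sub-subsequence $(n_{k_\ell})$ along which $F_{n_{k_\ell}}(y_j)\to F(y_j)$ almost surely for every $j\geq 1$. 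The almost sure argument above then applies to that sub-subsequence, yielding uniform convergence on $K$ almost surely. The subsequence criterion closes the argument.

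\textbf{Main obstacle.} The only genuinely delicate bookkeeping is identifying the uniform limit $\tilde F(\omega,\cdot)$ supplied by Lemma~\ref{lemma:Rockafellar} with the pointwise limit $F$: since $F$ is only specified up to null sets that may a priori depend on $x\in K$, one must ensure that the two functions coincide on all of $K$ on a common event of probability one. This is precisely where Lipschitz continuity of $\tilde F$ together with density of $M_0$ in $M$ is used, since two continuous functions agreeing on a dense subset must agree everywhere.
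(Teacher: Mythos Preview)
Your proposal is correct and follows essentially the same approach as the paper: reduce to the deterministic Lemma~\ref{lemma:Rockafellar} via a countable dense subset for the almost sure case, and use the subsequence characterization of convergence in probability together with a diagonal extraction for the in-probability case. If anything, you are more careful than the paper in explicitly addressing the identification of the uniform limit produced by Lemma~\ref{lemma:Rockafellar} with the given pointwise limit $F$, a point the paper leaves implicit.
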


\begin{myproof}{Theorem \ref{thm:consistency}}

Let $\Phi^*=\min_{x\in M}\Phi(x)$ be the smallest value of $\Phi$ on $M$ and fix some arbitrary $x^*\in M^*$. Fix $\varepsilon>0$ and let $K_\varepsilon=\{x\in M: d(x,M^*)=\varepsilon\}$. Since $M^*$ is bounded, $K_\varepsilon$ is bounded. Moreover, since the distance is continuous, $K_\varepsilon$ is a closed set. Hence, it is compact, since we have assumed $(M,d)$ to be proper. Since $\Phi$ is Lipschitz, it is continuous so it holds that $\eta:=\min_{x\in K_\varepsilon} \Phi(x)-\Phi^*>0$. Moreover, by the law of large numbers, $\Phi_n(x)\xrightarrow[n\to\infty]{} \Phi(x)$ almost surely, for all $x\in M$. Therefore, by Lemma~\ref{lemma:RockafellarRandom}, $\Phi_n$ converges uniformly to $\Phi$ on $K_\varepsilon$ almost surely. So, with probability $1$, $\inf_{x\in K_\varepsilon} \hat\Phi_n(x) > \Phi^*+2\eta/3$ for all large enough $n$. Moreover, also with probability $1$, $\hat\Phi_n(x^*)<\Phi^*+\eta/3$ for all large enough $n$. 

We have established that with probability $1$, it holds simultaneously, for all large enough $n$, that $\inf_{x\in K_\varepsilon} \hat\Phi_n(x)>\hat\Phi_n(x^*)$. Let us show that this, together with the convexity of $\hat\Phi_n$, implies that any minimizer of $\hat \Phi_n$ must be at a distance at most $\varepsilon$ of $M^*$. This will yield the desired result. 

Assume, for the sake of contradiction, that $\hat\Phi_n$ has a minimizer $\tilde x_n$ that satisfies $d(\tilde x_n,M^*)>\varepsilon$. Consider a geodesic $\gamma\in\Gamma_{x^*,\tilde x_n}$. Then, by continuity of the function $d(\cdot,M^*)$, there must be some $t\in [0,1]$ such that $\gamma(t)\in K_\varepsilon$. The convexity of $\hat\Phi_n$ yields the convexity of $\hat\Phi_n\circ\gamma$, which is minimum at $t=1$. Hence, $\hat\Phi_n\circ\gamma$ must be non-increasing, implying that $\inf_{x\in K_\varepsilon} \hat\Phi_n(x)\leq \hat\Phi_n(\gamma(t))\leq \hat\Phi_n(x^*)$, which yields a contradiction. 

\end{myproof}

\vspace{-0.4cm}

\subsection{Riemannian framework}

Now, we prove that, at least in a Riemannian manifold, the Lipschitz condition on $\phi$ can be dropped for the consistency of $\hat x_n$. In this section, we assume that $(M,g)$ is a complete Riemannian manifold and we have the following theorem. 

\begin{theorem} \label{thm:consistencyRiem}
	Let $(M,g)$ be a Riemannian manifold. Assume that $M^*$ is non-empty and bounded. For $n\geq 1$, let $\hat x_n$ be a minimizer of $\hat\Phi_n$. Then, it holds that $d(\hat x_n,M^*)\xrightarrow[n\to\infty]{} 0$ almost surely. 
\end{theorem}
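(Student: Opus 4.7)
The plan is to imitate the proof of Theorem~\ref{thm:consistency} but to replace its uniform-convergence ingredient (Lemma~\ref{lemma:RockafellarRandom}) by a Riemannian analog that drops the global Lipschitz hypothesis. The key observation is that on a Riemannian manifold, geodesically convex functions are automatically locally Lipschitz in the interior of their domain, with Lipschitz constants controlled by local upper bounds, just as in the Euclidean theory. So pointwise convergence of a sequence of geodesically convex functions should upgrade to uniform convergence on compacta essentially for free, and this is what I would exploit.

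First, since $(M,g)$ is complete and finite dimensional, Hopf-Rinow (already recalled in the preliminaries) implies that $(M, d)$ is proper. Hence, as in the opening of the proof of Theorem~\ref{thm:consistency}, for each $\varepsilon > 0$ the set $K_\varepsilon = \{x : d(x, M^*) = \varepsilon\}$ is compact, and the goal reduces to proving that $\hat\Phi_n \to \Phi$ uniformly on $K_\varepsilon \cup \{x^*\}$ almost surely, where $x^* \in M^*$ is fixed. Once this is in hand, the reflection/contradiction argument at the end of the proof of Theorem~\ref{thm:consistency} carries over verbatim.

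The central step I would carry out is thus the following Riemannian analog of Lemma~\ref{lemma:Rockafellar}: if $(f_n)$ is a sequence of geodesically convex functions on $M$ converging pointwise on a dense subset to $f$, then $f_n \to f$ uniformly on every compact $K \subset M$. I would cover $K$ by finitely many convex normal geodesic balls $B(y_j, r_j)$ in which any two points are joined by a unique minimizing geodesic lying in the ball. In each such ball I would pick a finite frame of points in a slightly larger concentric ball such that every point of $B(y_j, r_j)$ is realized as an iterated geodesic midpoint of frame points, use pointwise convergence on the frame plus geodesic convexity to bound $f_n$ from above on $B(y_j, r_j)$ uniformly in large $n$, and then reflect along a geodesic through $y_j$ to produce a matching lower bound. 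Once $f_n$ is eventually uniformly bounded on $B(y_j, r_j)$, a standard midpoint estimate along geodesics yields uniform Lipschitz bounds on a smaller concentric ball, and Lemma~\ref{lemma:Rockafellar} then delivers uniform convergence there. Passing from this deterministic statement to the almost-sure random version for $\hat\Phi_n$ is handled exactly as in Lemma~\ref{lemma:RockafellarRandom}, by restricting to the event of full probability on which the pointwise LLN holds over a countable dense subset.

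The main obstacle is precisely this step: upgrading pointwise to uniform convergence with no a priori Lipschitz bound. In Euclidean space the analog is classical (Rockafellar's Theorems 10.6 and 10.8), but on a Riemannian manifold one must localize because geodesic convex combinations are not affine and global frame arguments fail. Provided one works on sufficiently small normal convex balls, whose existence at every point of $M$ is standard, the Euclidean midpoint and reflection manipulations transfer with only minor bookkeeping. Everything else---properness of $M$, compactness of $K_\varepsilon$, pointwise LLN, and the geodesic-convexity contradiction at $\hat x_n$---is identical to the proof of Theorem~\ref{thm:consistency}.
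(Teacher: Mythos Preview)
Your proposal is correct and follows essentially the same route as the paper: the paper also reduces everything to the proof of Theorem~\ref{thm:consistency} once it has the Riemannian analog of Lemma~\ref{lemma:Rockafellar} (stated as Lemma~\ref{lemma:456}), and it obtains that lemma from the fact that geodesically convex functions on a Riemannian manifold are automatically locally Lipschitz (Lemma~\ref{lemma:123}, cited from Greene--Wu), which is precisely your ``key observation.'' Your sketch of the equi-Lipschitz/uniform-convergence step is actually more detailed than the paper's (which just says ``by adapting the proof of \cite[Theorem~10.8]{rockafellar1970convex}''); the only imprecision is the phrase ``every point of $B(y_j,r_j)$ is realized as an iterated geodesic midpoint of frame points''---iterated midpoints form a countable set---but the intended bound via nested two-point geodesic convex combinations (cevian-type decomposition in a small convex normal ball) or via approximation plus continuity works, and the rest of the argument is identical to Theorem~\ref{thm:consistency}.
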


Again, this theorem applies if $\hat x_n$ satisfies $\hat\Phi_n(\hat x_n)\leq \min_{x\in M}\hat\Phi_n(x) +\varepsilon_n$, where $\varepsilon_n$ is any non-negative error term that goes to zero almost surely, as $n\to\infty$.

Once one has Lemma~\ref{lemma:456} below, which builds upon Lemma~\ref{lemma:123}, the proof of Theorem~\ref{thm:consistencyRiem} is very similar to that of Theorem~\ref{thm:consistency}, hence, we omit it.

\begin{lemma}\cite{greene1973subharmonicity} \label{lemma:123}
	Any convex function on a Riemannian manifold is continuous and locally Lipschitz.
\end{lemma}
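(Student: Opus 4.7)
The plan is to reduce the Riemannian statement to the classical Euclidean fact that a convex function on an open convex subset of $\R^d$ is locally Lipschitz on the interior, by working inside a geodesically convex normal ball. Since local Lipschitzness implies continuity, it suffices to fix an arbitrary $x_0\in M$ and produce $r>0$ and $L>0$ such that $f$ is $L$-Lipschitz on $B(x_0,r)$.

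First I would choose $r>0$ smaller than both the injectivity radius and the convexity radius at $x_0$, so that $B:=B(x_0,4r)$ is a strongly geodesically convex normal ball: any two points in $B$ are joined by a unique minimizing geodesic contained in $B$, and $\Exp_{x_0}$ is a diffeomorphism from a Euclidean ball of $T_{x_0}M$ onto $B$. That such an $r$ exists is classical in Riemannian geometry, since the convexity radius is positive at every point.

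The proof then runs in three steps. \textbf{(i) Upper bound on $B(x_0,2r)$.} Pick $d+1$ vectors $w_0,\ldots,w_d\in T_{x_0}M$ of norm $3r$ whose Euclidean convex hull contains $0$ in its interior, set $q_i=\Exp_{x_0}(w_i)$, and let $K=\max_i f(q_i)$. Define $S_0=\{q_0,\ldots,q_d\}$ and $S_{n+1}=S_n\cup\bigcup_{p,p'\in S_n}\gamma_{p,p'}([0,1])$, where $\gamma_{p,p'}$ denotes the unique minimizing geodesic in $B$. By induction on $n$ and geodesic convexity of $f$, one has $f\le K$ on every $S_n$, hence on the geodesic convex hull $S_\infty=\bigcup_n S_n$. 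A smooth-dependence argument for geodesics on their endpoints, combined with the non-degeneracy of the Euclidean simplex $\mathrm{conv}(w_0,\ldots,w_d)$ at $0$, shows that $S_\infty\supseteq B(x_0,2r)$ for $r$ small enough. \textbf{(ii) Lower bound on $B(x_0,2r)$.} For $y$ in this ball, $y':=\Exp_{x_0}(-\Log_{x_0}(y))$ also lies in $B$, and $x_0$ is the midpoint of the unique geodesic from $y$ to $y'$; convexity then gives $f(y)\ge 2f(x_0)-f(y')\ge 2f(x_0)-K$. \textbf{(iii) Lipschitz bound on $B(x_0,r)$.} For distinct $y,z\in B(x_0,r)$, extend the geodesic from $y$ through $z$ until it hits $\partial B(x_0,2r)$ at some $w$, and write $z=\gamma_{y,w}(\lambda)$ with $\lambda=d(y,z)/d(y,w)$. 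Geodesic convexity then yields $f(z)-f(y)\le\lambda\,\mathrm{osc}_{B(x_0,2r)}f$, and since $d(y,w)\ge r$ by the triangle inequality, swapping the roles of $y$ and $z$ gives $|f(z)-f(y)|\le (2(K-f(x_0))/r)\,d(y,z)$.

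The main obstacle is the upper-bound step (i): geodesic convexity only provides one-dimensional convexity along each individual geodesic, so the Euclidean ``bound by the maximum over vertices of a simplex'' argument is not directly available. The iterative geodesic convex hull construction recovers it, but one must verify that $S_\infty$ actually exhausts a genuine neighborhood of $x_0$ of size comparable to $r$. Near $x_0$ the exponential chart is a diffeomorphism whose differential at $0$ is the identity, so for $r$ small enough the geodesic simplex with vertices $q_0,\ldots,q_d$ is a $C^1$-small perturbation of its Euclidean counterpart, and this verification reduces to a quantitative openness argument via the inverse function theorem.
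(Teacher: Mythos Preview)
The paper does not supply its own proof of this lemma; it merely cites \cite{greene1973subharmonicity} and uses the result as a black box. So there is no in-paper argument to compare against.

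Judged on its own merits, your proposal is correct and follows the standard route: bound $f$ from above on a small ball via a geodesic-simplex construction, bound it from below by reflecting through $x_0$, and then deduce the Lipschitz estimate by extending a geodesic from $y$ through $z$ to the boundary of a larger concentric ball. Steps (ii) and (iii) are clean and go through exactly as you wrote them, using strong geodesic convexity of $B(x_0,4r)$ to guarantee uniqueness of the relevant geodesics and that $x_0$ is the midpoint of $y$ and $y'$.

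The only genuine work is Step (i), which you correctly flag. Your resolution is right in spirit but could be sharpened: rather than arguing loosely about $S_\infty$, define explicitly a smooth map $\Psi_r:\Delta^d\to B$ by iterated geodesic interpolation of the $q_i$ (e.g.\ $\Psi_r(t_0,\dots,t_d)=\gamma_{q_d,\Psi_r'(t_0,\dots,t_{d-1})}(1-t_d)$ recursively), note that in normal coordinates at $x_0$ this map converges in $C^1$ to the Euclidean affine parametrization of the simplex $\mathrm{conv}(w_0,\dots,w_d)$ as $r\to 0$, and conclude via the inverse function theorem that its image covers a ball of radius comparable to $r$. Since $f\le K$ on the image of $\Psi_r$ by repeated one-dimensional convexity, this gives the upper bound cleanly. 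You may need to shrink $r$ once more at this stage, which is harmless.
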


By adapting the proof of \cite[Theorem 10.8]{rockafellar1970convex}, this lemma yields the following result.

\begin{lemma} \label{lemma:456}
	Let $(f_n)_{n\geq 1}$ be a sequence of convex functions on $M$. Assume that $f_n(x)\xrightarrow[n\to\infty]{}f(x)$, for all $x$ in a dense subset $M_0$ of $M$, where $f:M\to\R$ is some given convex function. Then, $f_n$ is equi-Lipschitz and converges to $f$ uniformly on any compact subset of $M$.
\end{lemma}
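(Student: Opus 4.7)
The plan is to adapt the proof of Rockafellar's Theorem 10.8 to the Riemannian setting, carrying out four steps: (i) establishing a local uniform upper bound on $(f_n)$; (ii) deducing a local uniform lower bound; (iii) converting local uniform boundedness into equi-Lipschitz on compact subsets; and (iv) combining equi-continuity with pointwise convergence on the dense set $M_0$ to obtain uniform convergence on compact subsets. Lemma~\ref{lemma:123} plays no direct role in the argument but motivates the shape of step (iii).

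For step (i), I would fix $x \in M$, work in a convex normal ball $B$ around $x$, and use density of $M_0$ together with the exponential map at $x$ to choose $d+1$ points $y_0,\ldots,y_d \in M_0 \cap B$ whose iterated geodesic join $\Delta := \Delta(y_0,\ldots,y_d)$, defined recursively as the union, over all $z \in \Delta(y_1,\ldots,y_d)$, of the geodesic segment from $y_0$ to $z$, contains an open neighborhood $U$ of $x$. Iterated applications of the defining convexity inequality along each geodesic then yield $\sup_{\Delta} f_n \leq \max_i f_n(y_i)$, and since $f_n(y_i) \to f(y_i)$, the right-hand side is bounded in $n$, giving a local uniform upper bound on $U$.

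For step (ii), suppose from step (i) that $\sup_n \sup_{B(x,2r)} f_n \leq C$ for some $r > 0$ and $C$. I would then pick any $y \in M_0 \cap B(x, r/2)$ with $y \neq x$ and, for $z \in B(x, r)$, extend the geodesic from $z$ through $y$ to the point $w$ satisfying $d(y,w) = d(y,z)$, so that $y$ is the midpoint of $z$ and $w$. The triangle inequality gives $d(x,w) \leq 2r$, whence $w \in B(x, 2r)$, and convexity yields $f_n(z) \geq 2 f_n(y) - f_n(w) \geq 2 f_n(y) - C$, which is bounded below uniformly in $n$ since $f_n(y) \to f(y)$.

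Step (iii) is the standard convex-analytic argument, carried out along geodesics: for $y, z \in B(x, r)$ with $y \neq z$, extending the geodesic from $y$ to $z$ beyond $z$ by length $r$ places $z$ as an interior point of a geodesic joining $y$ to some $w \in B(x, 2r)$, and convexity then gives $|f_n(z) - f_n(y)| \leq (2K/r)\, d(y,z)$ whenever $|f_n| \leq K$ on $B(x, 2r)$. Covering any compact $K \subset M$ by finitely many such balls produces a common Lipschitz constant $L$ for all $f_n$ on $K$. Step (iv) then follows from a standard finite-net argument: for $\varepsilon > 0$, pick a finite $\varepsilon/(3L)$-net in $M_0 \cap K$ (possible by density and compactness), invoke pointwise convergence at its finitely many points, and use the common Lipschitz bound to interpolate, obtaining $\sup_K |f_n - f| \leq \varepsilon$ for all large $n$.

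The main obstacle is the claim in step (i) that $d+1$ points in $M_0$ can be chosen whose iterated geodesic join contains a neighborhood of $x$. In Euclidean space this is immediate from affine geometry; in a Riemannian manifold it should be justified by a continuity/implicit-function-theorem argument in normal coordinates at $x$, picking ideal vectors $v_0,\ldots,v_d \in T_xM$ whose linear simplex contains $0$ in its interior, observing that the map sending the tuple $(y_0,\ldots,y_d)$ to the parametrization of $\Delta(y_0,\ldots,y_d)$ is continuous at the configuration $y_i = \Exp_x(v_i)$, and using openness at this non-degenerate configuration to conclude that a small perturbation of the $y_i$ into $M_0$ still yields an iterated geodesic join whose interior contains $x$.
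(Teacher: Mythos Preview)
Your proposal is correct and is exactly the adaptation of Rockafellar's Theorem~10.8 that the paper has in mind; the paper itself gives no details beyond the sentence ``by adapting the proof of \cite[Theorem 10.8]{rockafellar1970convex}, this lemma yields the following result,'' so you have in fact supplied substantially more than the paper does. Your observation that Lemma~\ref{lemma:123} is not literally invoked is accurate---what is really needed is the \emph{quantitative} mechanism behind it (local boundedness $\Rightarrow$ local Lipschitz, with an explicit constant), which your steps (i)--(iii) reprove uniformly in $n$; the paper's phrasing is slightly loose on this point. The only place requiring genuine care is, as you note, step~(i): the claim that the iterated geodesic join of $d{+}1$ nearby points in general position has nonempty interior. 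Your proposed justification via normal coordinates and a continuity/rank argument for the parametrizing map $[0,1]^d\to M$ is the right one and goes through once one checks that in normal coordinates the Riemannian construction is a $C^1$-small perturbation of the Euclidean simplex map, whose differential has full rank at interior parameters.
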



\begin{remark}
	Lemma~\ref{lemma:123} is key in the proof of consistency if one does not assume that $\phi(z,\cdot)$ is $L$-Lipschitz, for all $z\in\mathcal Z$, with some $L>0$ that does not depend on $z$. As we pointed out in the previous section, we do not know whether this lemma still holds true in more general proper geodesic spaces and we leave this as an open question. 
\end{remark}

\section{Asymptotic normality}

Now, we turn to asymptotic normality of $\hat x_n$. Riemannian manifolds offer a natural and reasonable framework because, on top of their metric structure, they enjoy smoothness which allows to compute first and second order expansions and Gaussian distributions can be defined on their tangent spaces. Hence, in this section, we assume that $(M,g)$ be a complete Riemannian manifold.

\begin{theorem} \label{thm:asymptoticnormality}

Assume that the distribution $P$ and the function $\phi$ satisfy the following assumptions:
\begin{itemize}
	\item $\Phi$ has a unique minimizer $x^*\in M$	
	\item $\Phi$ is twice differentiable at $x^*$ and $\Hess\Phi(x^*)$ is positive definite
	\item There exists $\eta>0$ such that $\E[\|g(Z,x)\|_{x^*}^2]<\infty$, for all $x\in M$ with $d(x^*,x)\leq \eta$, where $g(Z,x)$ is a measurable subgradient of $\phi(Z,\cdot)$ at the point $x$.
\end{itemize}
Then, $\hat x_n$ is asymptotically normal, that is, 
$$\sqrt n\Log_{x^*}(\hat x_n) \xrightarrow[n\to\infty]{(d)} \mathcal N_{T_{x^*}M}(0,V(x^*))$$
where $V(x^*)=S(x^*)^{-1}B(x^*)S(x^*)^{-1}$, $S(x^*)=\Hess\Phi(x^*)$, $B(x^*)=\E[g(Z,x^*)g(Z,x^*)^\top]$ and we denote by $\mathcal N_{T_{x^*}M}(0,V(x^*))$ the normal distribution on the Euclidean space $T_{x^*}M$ with mean $0$ and covariance operator $V(x^*)$.

\end{theorem}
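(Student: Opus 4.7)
The plan is to follow the classical convex $M$-estimation scheme of Niemiro, lifted to the tangent space at $x^*$ via the exponential chart. By Theorem~\ref{thm:consistencyRiem}, $\hat x_n\to x^*$ almost surely, so eventually $\hat x_n$ lies in a ball on which $\Exp_{x^*}$ is a diffeomorphism and $\hat u_n:=\sqrt n\,\Log_{x^*}(\hat x_n)\in T_{x^*}M$ is well defined. Setting
$$\Psi_n(u):=n\bigl[\hat\Phi_n(\Exp_{x^*}(u/\sqrt n))-\hat\Phi_n(x^*)\bigr],\qquad u\in T_{x^*}M,$$
$\hat u_n$ locally minimizes $\Psi_n$, and the target conclusion reduces to $\hat u_n \xrightarrow[n\to\infty]{(d)} -S(x^*)^{-1}W$ with $W\sim \mathcal N_{T_{x^*}M}(0,B(x^*))$.

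Next, I would establish, for each fixed $u\in T_{x^*}M$, the pointwise expansion
$$\Psi_n(u)=\langle W_n,u\rangle_{x^*} + \tfrac12\langle u,S(x^*)u\rangle_{x^*} + o_P(1),\qquad W_n:=n^{-1/2}\sum_{i=1}^n g(Z_i,x^*).$$
The deterministic piece follows from the two-term Taylor expansion of $\Phi$ in the exponential chart at $x^*$: the first-order term vanishes because $x^*$ is a minimizer, and the second-order term is governed by $S(x^*)=\Hess\Phi(x^*)$. The stochastic piece uses the Riemannian subgradient inequality $\phi(z,\Exp_{x^*}(v))\geq \phi(z,x^*)+\langle g(z,x^*),v\rangle_{x^*}$, paired with a matching upper bound from a subgradient at $\Exp_{x^*}(u/\sqrt n)$ whose proximity to $g(z,x^*)$ is controlled by the integrability assumption on subgradients near $x^*$. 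Since $\Phi$ is differentiable at its minimizer, $\E[g(Z,x^*)]=\mathrm{grad}\,\Phi(x^*)=0$, and the square-integrability assumption triggers the classical multivariate CLT on the Euclidean space $T_{x^*}M$, giving $W_n\xrightarrow[n\to\infty]{(d)} W\sim\mathcal N_{T_{x^*}M}(0,B(x^*))$.

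Finally, I would convert pointwise convergence of $\Psi_n$ to convergence of the argmin $\hat u_n$. The candidate limit $\Psi(u):=\langle W,u\rangle+\tfrac12\langle u,S(x^*)u\rangle$ is strictly convex with unique minimizer $-S(x^*)^{-1}W$, so a Hjort-Pollard style argmin-convexity argument, followed by Slutsky's theorem, delivers $\hat u_n = -S(x^*)^{-1}W_n+o_P(1)$ and hence the claimed normal limit. This final step is the main obstacle in the Riemannian setting: each $\phi(z,\cdot)$ is geodesically convex, so $u\mapsto\phi(z,\Exp_{x^*}(u/\sqrt n))$ is convex along every ray through the origin in $T_{x^*}M$, but not along arbitrary line segments on positively curved manifolds. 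I would handle this either by first establishing tightness of $\hat u_n$ via the quadratic lower bound $\Phi(x)-\Phi(x^*)\gtrsim d(x,x^*)^2$ near $x^*$ combined with the uniform-convergence conclusion of Lemma~\ref{lemma:456} and then identifying weak subsequential limits through the pointwise convergence of $\Psi_n$, or by quantifying the curvature-induced distortion between straight-line and geodesic convex combinations (of order $\kappa\|u\|^2/n$ on a fixed compact neighborhood of the origin) and absorbing it into the $o_P(1)$ remainder, so that an approximate-convexity variant of Niemiro's argmin lemma applies.
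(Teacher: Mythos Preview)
Your overall scheme is the same as the paper's: lift to $T_{x^*}M$ via $\Exp_{x^*}$, establish the pointwise quadratic expansion of $\Psi_n$ \`a la Haberman--Niemiro, apply the CLT to $W_n$, then pass from pointwise to argmin convergence. Two points deserve comment.

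\textbf{Pointwise expansion.} Your sketch of the stochastic remainder is underspecified. The integrability hypothesis on subgradients near $x^*$ only gives a uniform $L^2$ bound; it does not by itself force the variance of $\sum_i W_{i,n}$ to vanish. The paper sandwiches each $W_{i,n}$ between $0$ and $n^{-1}Y_n$, where $Y_n=\langle u,\pi_n^{-1}g(Z_1,\Exp_{x^*}(u/\sqrt n))-g(Z_1,x^*)\rangle_{x^*}$, and then uses a Riemannian \emph{monotonicity of subgradients} lemma (Lemma~\ref{lemma:monotonsubgr}) to show that $(Y_n)$ is nonincreasing. Twice differentiability of $\Phi$ at $x^*$ forces $\E[Y_n]\to 0$, hence $Y_n\downarrow 0$ a.s., and monotone convergence then gives $\E[Y_n^2]\to 0$. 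This monotonicity step (via parallel transport) is the missing ingredient in your outline.

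\textbf{Argmin step.} You correctly flag that $u\mapsto\Psi_n(u)$ need not be convex, and your second alternative is on the right track. The paper's device is cleaner than either of your proposals: Lemma~\ref{lemma:convex_generalized} shows that for any $L$-Lipschitz geodesically convex $f$, the pulled-back map $f\circ\Exp_{x^*}+\tfrac{\alpha}{2}\|\cdot\|_{x^*}^2$ is genuinely convex on a fixed ball, with $\alpha$ depending only on $L$ and the local geometry. After the $\sqrt n$-rescaling this adds a \emph{fixed} quadratic $\tfrac{\alpha}{2}\|u\|^2$ to both $F_n$ and $F$; one then applies Lemma~\ref{lemma:RockafellarRandom} to the convex pair $(\tilde F_n,\tilde F)$ and subtracts the common term. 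Your claim that the convexity defect of $\Psi_n$ is of order $\kappa\|u\|^2/n$ and can be absorbed into $o_P(1)$ is not quite right as stated: after multiplying by $n$ the defect is $O(1)$ uniformly (this is exactly the $\alpha$ in Lemma~\ref{lemma:convex_generalized}). A more careful normal-coordinate estimate does give a vanishing midpoint defect of order $Ln^{-1/2}$ on compacts, so an approximate-convexity argument can be made to work, but the paper's route avoids this bookkeeping entirely.
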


Here, $\Hess\Phi(x^*)$ is the Hessian of $\Phi$ at the point $x^*$ (see Appendix~\ref{sec:AppendixRiemann}). Recall that $\Log_{x^*}(\hat x_n)$ is the Riemannian analog of ``$\hat x_n-x^*$", so Theorem~\ref{thm:asymptoticnormality} is a natural formulation of asymptotic normality of $\hat x_n$ is the Riemannian context.

A close look to the end of the proof will convince the reader that one can assume, without modifying the conclusion of this theorem, that $\hat x_n$ need not be a minimizer of $\hat\Phi_n$, but should satisfy that $\hat\Phi_n(\hat x_n)\leq \min_{x\in M}\hat\Phi_n(x)+o_P(n^{-1/2})$, where $o_P(n^{-1/2})$ is a possibly random error term that goes to zero in probability at a faster rate than $n^{-1/2}$.

\begin{remark}
The statement of Theorem \ref{thm:asymptoticnormality} implicitly assumes the existence of measurable subgradients of $\phi$, that is, a mapping $g:\mathcal Z\times M\to TM$ such that for $P$-almost all $z\in \mathcal Z$ and for all $x\in M$, $g(z,x)\in T_xM$ is a subgradient of $\phi(z,\cdot)$ at the point $x\in M$, and for all $x\in M$, $g(\cdot,x):\mathcal Z\to T_xM$ is measurable. This is actually guaranteed by Lemma \ref{lemma:MeasurableSelections} on measurable selections, see Appendix \ref{sec:AppendixAdditionalLemmas}. 
Note also that the last assumption is automatically guaranteed if $\phi(z,\cdot)$ is locally $L(z)$-Lipschitz at $x^*$, for $P$-almost all $z\in\mathcal Z$, for some $L\in L^2(P)$. 
\end{remark}

The proof of Theorem~\ref{thm:asymptoticnormality} is strongly inspired by \cite{haberman1989concavity} and \cite{niemiro1992asymptotics} but requires subtle tools to account for the non-Euclidean geometry of the problem. The idea of the proof is to show that 
$$\hat\Phi_n(\Exp_{x^*}(u/\sqrt n)) \approx \hat\Phi_n(x^*)+\frac{1}{\sqrt n}\langle u,n^{-1}\sum_{i=1}^n g(Z_i,x^*) \rangle_{x^*} + n^{-1}\langle u,\Hess\Phi(x^*)u \rangle_{x^*}$$ as $n$ grows large, uniformly in $u\in T_{x^*}M$. Hence, the minimizer of the left-hand side which, by definition, is $\Log_{x^*}\hat x_n$, must be close to the minimizer of the right-hand side, which has a closed form, and which is asymptotically normal, thanks to the central limit theorem. The main difficulty is to handle $\hat\Phi_n(\Exp_{x^*}(u/\sqrt n))$, which is not a convex function of $u\in T_{x^*}M$, even though $\hat\Phi_n$ is convex on $M$. Lemma \ref{lemma:convex_generalized} below is our most technical result, and is the key argument to adapt the techniques of \cite{haberman1989concavity,niemiro1992asymptotics}
to the Riemannian setup.

\begin{lemma} \label{lemma:convex_generalized}

Let $(M,g)$ be a Riemannian manifold and $x_0\in M$. There exists $r>0$ such that the following holds. 
Let $L>0$. There exists $\alpha>0$ such that for all convex functions $f:M\to\R$ that are $L$-Lipschitz on $B(x_0,r)$, the map $$f\circ\Exp_{x_0}+\frac{\alpha}{2}\|\cdot\|_{x_0}^2:T_{x_0}M\to\R$$ is convex on $\{u\in T_{x_0}M:\|u\|_{x_0}\leq r/2\}$, where $\|\cdot\|_{x_0}$ is the norm induced by $g$ on the tangent space $T_{x_0}M$. 
 
\end{lemma}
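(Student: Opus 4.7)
The plan is to work in normal coordinates at $x_0$, compare the Euclidean segment $(1-t)u+tv$ in $T_{x_0}M$ with the coordinate representation of the Riemannian geodesic joining $\Exp_{x_0}(u)$ and $\Exp_{x_0}(v)$, and use the $L$-Lipschitz continuity together with the geodesic convexity of $f$ on $M$ to absorb the discrepancy into the corrector $\tfrac{\alpha}{2}\|\cdot\|_{x_0}^2$. First I would fix $r>0$ small enough that $\Exp_{x_0}$ is a smooth diffeomorphism from $\overline{B}(0,2r)\subset T_{x_0}M$ onto its image, the ball $\overline{B}(x_0,r)\subset M$ lies within the convexity radius at $x_0$ (so that any two of its points are joined by a unique minimizing geodesic that remains inside the ball), and the Christoffel symbols $\Gamma^k_{ij}$ in the chosen normal chart satisfy $|\Gamma^k_{ij}(y)|\le C_1\|y\|_{x_0}$ for $\|y\|_{x_0}\le 2r$. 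The last bound uses $\Gamma^k_{ij}(0)=0$, a defining feature of normal coordinates, together with smoothness; the constants $C_1$ and $r$ depend only on the local Riemannian geometry near $x_0$.

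Given $u,v\in T_{x_0}M$ with $\|u\|_{x_0},\|v\|_{x_0}\le r/2$, set $p=\Exp_{x_0}(u)$, $q=\Exp_{x_0}(v)$, and let $\gamma:[0,1]\to M$ be the unique constant-speed minimizing geodesic from $p$ to $q$; by the convexity radius choice, $\gamma$ stays in $\overline{B}(x_0,r)$, where $f$ is $L$-Lipschitz. Define the two curves $c_1(t)=(1-t)u+tv$ and $c_2(t)=\Exp_{x_0}^{-1}(\gamma(t))$ in $T_{x_0}M$, both running from $u$ to $v$. The heart of the argument is the comparison estimate
\begin{equation*}
\|c_1(t)-c_2(t)\|_{x_0}\;\le\;C_2\,r\,t(1-t)\,\|u-v\|_{x_0}^2\qquad (t\in[0,1]),
\end{equation*}
for some $C_2$ depending only on the local geometry near $x_0$. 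To obtain it, set $\varepsilon=c_2-c_1$; since $c_1$ is affine, $\ddot\varepsilon=\ddot c_2$, and the geodesic equation in normal coordinates gives $\ddot c_2^k=-\Gamma^k_{ij}(c_2)\,\dot c_2^i\dot c_2^j$. Gauss's lemma yields $\|c_2(t)\|_{x_0}=d(x_0,\gamma(t))\le r$, hence $|\Gamma^k_{ij}(c_2(t))|\le C_1 r$; the smoothness of $\Exp_{x_0}^{-1}$ on the relevant compact set gives $\|\dot c_2(t)\|_{x_0}\le C_3\|u-v\|_{x_0}$. Combining these and using that $\varepsilon(0)=\varepsilon(1)=0$, the elementary Green's-function bound for $\ddot\varepsilon$ with Dirichlet boundary conditions on $[0,1]$ produces the claimed estimate.

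To conclude, Lipschitz continuity of $\Exp_{x_0}$ on $\overline{B}(0,r)$ converts the coordinate bound into the metric bound $d(\Exp_{x_0}(c_1(t)),\gamma(t))\le C_4\,r\,t(1-t)\|u-v\|_{x_0}^2$, and the Lipschitz assumption on $f$ then yields
\begin{equation*}
f(\Exp_{x_0}(c_1(t)))-f(\gamma(t))\;\le\;L C_4\,r\,t(1-t)\,\|u-v\|_{x_0}^2.
\end{equation*}
Geodesic convexity of $f$ gives $f(\gamma(t))\le(1-t)f(p)+tf(q)$, and the parallelogram identity $(1-t)\|u\|_{x_0}^2+t\|v\|_{x_0}^2-\|c_1(t)\|_{x_0}^2=t(1-t)\|u-v\|_{x_0}^2$ shows that the choice $\alpha=2LC_4 r$ yields convexity of $f\circ\Exp_{x_0}+\tfrac{\alpha}{2}\|\cdot\|_{x_0}^2$ on $\overline{B}(0,r/2)$. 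The main technical obstacle is the comparison estimate on $\varepsilon$: it encodes the fact that in normal coordinates the deviation of a geodesic from a straight line is both quadratic in its length and linear in the distance from the origin of the chart; keeping the constants uniform in $f$ (depending only on $L$, on $r$, and on the geometry of $M$ near $x_0$) is the crucial feature that makes $\alpha$ a function of $L$ alone, as the statement requires.
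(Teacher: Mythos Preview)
Your argument is correct and takes a genuinely different route from the paper's proof. The paper first treats the case of smooth $f$ by computing the Hessian of $f\circ\Exp_{x_0}$, splitting it into a nonnegative piece coming from $\Hess f\geq 0$ and a remainder of size $\|\nabla f\|\cdot\|D_s\partial_s\eta\|$ controlled by the Lipschitz constant and the local geometry. It then reduces the general case to the smooth one via Greene--Wu mollification, which only gives \emph{approximately} convex smooth functions $f_n$; to repair this the paper introduces an auxiliary strongly convex function $\phi$ (Lemma~\ref{lemma:ExistStrConv}), shows the $f_n$ are uniformly Lipschitz, applies the smooth case to $f_n+\phi$, and passes to the limit.

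Your proof bypasses all of this machinery by working directly with the secant inequality. The key observation---that the discrepancy between the Euclidean chord $c_1$ and the normal-coordinate image $c_2$ of the Riemannian geodesic is of order $r\,t(1-t)\,\|u-v\|_{x_0}^2$---falls out of the geodesic equation, the vanishing of the Christoffel symbols at the origin of a normal chart, and the elementary Green's-function bound for a Dirichlet problem on $[0,1]$. The factor $t(1-t)\|u-v\|_{x_0}^2$ is then exactly what the parallelogram identity supplies on the corrector side, so one can read off $\alpha$ immediately. This is shorter and avoids both the smooth/nonsmooth case split and the appeal to \cite{greene1973subharmonicity}. The paper's Hessian route, on the other hand, makes the dependence of $\alpha$ on $L$ and on the curvature more explicit (through $\|D_s\partial_s\eta\|$) and fits naturally with the second-order expansions used elsewhere in the proof of Theorem~\ref{thm:asymptoticnormality}.
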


In the proof of this lemma, we first consider the case where $f$ is smooth and then proceed by approximation of convex functions by smooth functions, leveraging \cite[Theorem 2]{greene1973subharmonicity}. \newline

\begin{myproof}{Theorem \ref{thm:asymptoticnormality}}

Fix $u\in T_{x^*}M$. For all $i=1,\ldots,n$, let 
$$W_{i,n}=\phi(Z_i,\Exp_{x^*}(u/\sqrt n))-\phi(Z_i,x^*)-\frac{1}{\sqrt n}\langle u,g(Z_i,x^*)\rangle_{x^*}.$$ 
The definition of subgradients yields that $W_{i,n}\geq 0$. 
Let $\pi_n$ be the parallel transport from $x^*$ to $x_n:=\Exp_{x^*}(u/\sqrt n)$ through the geodesic $\gamma(t)=\Exp_{x^*}(tu/\sqrt n), t\in [0,1]$. Then, $x^*=\Exp_{x_n}(-\pi_n^{-1}(u)/\sqrt n)$. Thus, one has the following:
\begin{align}
	W_{i,n} & \leq \langle \pi_n(u)/\sqrt n,g(Z_i,x_n)\rangle_{x_n} - \langle u/\sqrt n,g(Z_i,x^*)\rangle_{x^*} \nonumber \\
	& = \langle u/\sqrt n, \pi_n^{-1}\left(g(Z_i,\Exp_{x^*}(u/\sqrt n)\right)-g(Z_i,x^*)\rangle_{x^*},
\end{align}
where we used the fact that $\pi_n$ is an isometry in the last equality.

Therefore, $\E[W_{i,n}^2]\leq n^{-1}\E[Y_n^2]$, where 
$$Y_n=\langle u, \pi_n^{-1}\left(g(Z_1,\Exp_{x^*}(u/\sqrt n))\right)-g(Z_1,x^*)\rangle_x.$$
Note that $\E[Y_n^2]$ is finite if $n$ is large enough, thanks to the last assumption of the theorem. 

The inequalities above yield that $Y_n\geq 0$ for all $n\geq 1$ and by Lemma \ref{lemma:monotonsubgr}, the sequence $(Y_n)_{n\geq 1}$ is non-increasing. Therefore, $Y_n$ converges almost surely to a non-negative random variable $Y$. Let us now prove that $Y=0$ almost surely. In order to do so, let us prove that $\E[Y_n]\xrightarrow[n\to\infty]{} 0$. Since, by the monotone convergence theorem, $\E[Y_n]$ must converge to $\E[Y]$, we will obtain that $\E[Y]=0$. This, combined with the fact that $Y\geq 0$ almost surely, will yield the desired result. 

Fix $x\in M$ with $d(x,x^*)\leq \eta$. Then, for all $v\in T_xM$, $\phi(Z_1,\Exp_x(v))\geq \phi(Z_1,x)+\langle v,g(Z_1,x) \rangle_x$. Taking the expectation readily yields that $\E[g(Z_1,x)]\in \partial \Phi(x)$. Hence, for all integers $n$ that are large enough so $\|u/\sqrt n\|_{x^*}\leq \eta$, it holds that $g_n:=\E[g(Z_1,\Exp_{x^*}(u/\sqrt n)] \in \partial \Phi(\Exp_{x^*}(u/\sqrt n))$. Therefore, 
$$\E[Y_n]=\langle u,\pi_n^{-1}(g_n)-\nabla\Phi(x^*) \rangle_{x^*} \xrightarrow[n\to\infty]{} 0$$
since $\Phi$ is twice differentiable at $x^*$.

Finally, we obtain that $Y=0$ almost surely. Now, $(Y_n)_{n\geq 0}$ is a non-increasing, non-negative sequence, so the monotone convergence theorem yields that $\E[Y_n^2]$ goes to zero as $n\to\infty$, so that
$$\var\left(\sum_{i=1}^n W_{i,n}\right) = \sum_{i=1}^n \var(W_{i,n}) \leq \sum_{i=1}^n \E[W_{i,n}^2] \leq \E[Y_n^2] \xrightarrow[n\to\infty]{} 0.$$

Therefore, by Chebychev's inequality, $\sum_{i=1}^n (W_{i,n}-\E[W_{i,n}])$ converges in probability to zero, as $n\to\infty$, that is, 
\begin{align*}
	& n\left(\hat\Phi_n(\Exp_{x^*}(u/\sqrt n))-\hat\Phi_n(x^*)\right)-\frac{1}{\sqrt n}\langle u,\sum_{i=1}^n g(Z_i,x^*) \rangle_{x^*} \\
	& \quad \quad- n\left(\Phi(\Exp_{x^*}(u/\sqrt n))-\Phi(x^*) - \langle u,\nabla\Phi(x^*)\rangle_{x^*}\right) \xrightarrow[n\to\infty]{}0
\end{align*}
in probability. Since $\Phi$ is twice differentiable at $x^*$, we obtain 
\begin{equation}
	n\left(\hat\Phi_n(\Exp_{x^*}(u/\sqrt n))-\hat\Phi_n(x^*)\right)-\frac{1}{\sqrt n}\langle u,\sum_{i=1}^n g(Z_i,x^*) \rangle_{x^*} - \langle u,\Hess\Phi(x^*)u \rangle_{x^*} \xrightarrow[n\to\infty]{}0 \label{eq:ConvergenceProba}
\end{equation}
in probability. For $n\geq 1$ and $u\in T_{x^*}M$
, denote by 
$$F_n(u)=n\left(\hat\Phi_n(\Exp_{x^*}(u/\sqrt n))-\hat\Phi_n(x^*)\right)-\frac{1}{\sqrt n}\langle u,\sum_{i=1}^n g(Z_i,x^*) \rangle_{x^*}$$ 
and by 
$$F(u)=\langle u,\Hess\Phi(x^*)u \rangle_{x^*}.$$ 
We have shown that $F_n(u)$ converges in probability to $F(u)$ as $n\to\infty$, for any fixed $u\in T_{x^*}M$. Now, we would like to make use of Lemma \ref{lemma:RockafellarRandom}. However, the functions $F_n$ are not guaranteed to be convex. We adjust these functions using lemma \ref{lemma:convex_generalized}.

Let $R>0$ be any fixed positive number. Let $K=B_{x^*}(0,R)$ be the ball in $T_{x^*}M$ centered at the origin, with radius $R$ and let $\tilde K=B(x^*,R)$. Then, $K$ and $\tilde K$ are both compact. By the law of large numbers, $\hat\Phi_n(x)$ converges almost surely to $\Phi(x)$, for all $x\in \tilde K$. Therefore, by Lemma~\ref{lemma:RockafellarRandom}, $\sup_{x\in\tilde K}|\hat\Phi_n(x)-\Phi(x)|\xrightarrow[n\to\infty]{} 0$ almost surely. Hence, Lemma \ref{lemma:456} yields the existence of a (possibly random) $L>0$ such that the following holds with probability $1$: 
$$|\hat\Phi_n(x)-\hat\Phi_n(y)|\leq Ld(x,y), \forall n\geq 1, \forall x,y\in \tilde K.$$

Now, let $r>0$ be as in Lemma~\ref{lemma:convex_generalized}. Then, there exists a (possibly random) $\alpha>0$ such that with probability $1$, for all $n\geq 1$, $\hat\Phi_n\circ\Exp_{x^*}+\frac{\alpha}{2}\|\cdot\|_{x^*}^2$ is convex on $K$. 

Denote by $\tilde F_n(u)=F_n(u)+\frac{\alpha}{2}\|u\|_{x^*}^2$ and $\tilde F(u)=F(u)+\frac{\alpha}{2}\|u\|_{x^*}^2$, for all $u\in K$ and $n\geq 1$. For all large enough $n\geq 1$ (such that $R/\sqrt n\leq r$), each $F_n$ is convex on $K$ with probability $1$, and we trivially have that $\tilde F_n(u)\xrightarrow[n\to\infty]{}\tilde F(u)$ in probability, for all $u\in K$. Therefore, Lemma~\ref{lemma:RockafellarRandom} yields that $\sup_{u\in K} |F_n(u)-F(u)|=\sup_{u\in K} |\tilde F_n(u)-\tilde F(u)| \xrightarrow[n\to\infty]{} 0$ in probability. 

Finally, we have shown that for any $R>0$, 
\begin{equation}
	\sup_{\|u\|_{x^*}\leq R} \left|n\left(\hat\Phi_n(\Exp_{x^*}(u/\sqrt n))-\hat\Phi_n(x^*)\right)-\frac{1}{\sqrt n}\langle u,\sum_{i=1}^n g(Z_i,x^*) \rangle_{x^*} - \langle u,\Hess\Phi(x^*)u \rangle_{x^*}\right| \xrightarrow[n\to\infty]{}0 \label{eq:lastconv}
\end{equation}
in probability. 

By definition of $\hat x_n$, $\hat\Phi_n(\Exp_{x^*}(u/\sqrt n))$ is minimized for $u=\sqrt n \Log_{x^*}(\hat x_n)$. Moreover, one easily checks that $\frac{1}{\sqrt n}\langle u,\sum_{i=1}^n g(Z_i,x^*) \rangle_{x^*} - \langle u,\Hess\Phi(x^*)u \rangle_{x^*}$ is minimized for 
$$u=-\Hess\Phi(x^*)^{-1}\frac{1}{\sqrt n}\sum_{i=1}^n g(Z_i,x^*).$$

Note that $g(Z_1,x^*), \ldots,g(Z_n,x^*)$ are i.i.d. random vectors in $T_{x^*}M$. They are centered, since $\E[g(Z_1,x^*)]=\nabla\Phi(x^*)=0$ by the first order condition, and they have a second moment, thanks to the last assumption of the theorem. Therefore, the multivariate central limit theorem  yields that 
\begin{equation} \label{eq:CLT1}
\frac{1}{\sqrt n}\sum_{i=1}^n g(Z_i,x^*)\xrightarrow[n\to\infty]{} \mathcal N_{T_{x^*}M}\left(0,B(x^*)\right)
\end{equation}
in distribution. In particular, the sequence $\frac{1}{\sqrt n}\sum_{i=1}^n g(Z_i,x^*)$ is bounded in probability, so one can choose $R$ in  \eqref{eq:lastconv} so that $\frac{1}{\sqrt n}\sum_{i=1}^n g(Z_i,x^*) \in B_{x_0}(0,R)$ for all $n\geq 1$, with arbitrarily large probability. 

From \eqref{eq:lastconv}, it is easy to obtain that
$\sqrt n \Log_{x^*}(\hat x_n)-\Hess\Phi(x^*)^{-1}\frac{1}{\sqrt n}\sum_{i=1}^n g(Z_i,x^*)$ converges in probability to zero, as $n\to\infty$. Therefore, by Slutsky's theorem, $\sqrt n \Log_{x^*}(\hat x_n)$ has the same limit in distribution as $\Hess\Phi(x^*)^{-1}\frac{1}{\sqrt n}\sum_{i=1}^n g(Z_i,x^*)$, which concludes the proof thanks to \eqref{eq:CLT1}.

\end{myproof}

\begin{myproof}{Lemma~\ref{lemma:convex_generalized}}

Fix $r>0$ (to be chosen) and denote by $K=B(x_0,r)$, which is compact by Hopf-Rinow theorem \cite[Chapter 7, Theorem 2.8]{doCarmo1992riemannian}. Let $f:M\to\R$ be convex and $L$-Lipschitz ($L>0$) on $K$ and set $F=f\circ\Exp_{x_0}:T_{x_0}M\to\R$.  

We split the proof of this lemma into steps.

\vspace{2mm}
\paragraph{Step 1: Case where $f$ is smooth.}

First, assume that $f$ is smooth. Then, $F$ is a smooth function. In the sequel, we denote by $\Hess f$ the Hessian of $f$ and by $\bar \Hess F$ the Hessian of $F$ (i.e., we use $\Hess$ for Hessian of functions defined on $M$ and $\bar\Hess$ for Hessian of functions defined on the tangent space $T_{x_0}M$).
Fix $u,v\in T_{x_0}M$ and let $\phi(t)=F(u+tv)$, for $s\in\R$. Denote by $\eta(s,t)=\Exp_{x_0}(t(u+sv))$, for all $s,t\in\R$. The Hessian of $F$ is given by $\bar \Hess F(u)(v,v)=\phi''(0)$. For all $s\in \R$, 
$$\phi'(s)=\langle \nabla f(\eta(s,1)),\frac{\partial \eta}{\partial s}(s,1)\rangle_{\eta(s,1)}$$
and 
\begin{equation} \label{eq:Hessian}
	\phi''(s)=\langle \Hess f(\eta(s,t))\frac{\partial \eta}{\partial s}(s,1),\frac{\partial \eta}{\partial s}(s,1)  \rangle_{\eta(s,1)} + \langle \nabla f(\eta(s,1)),D_s\frac{\partial \eta}{\partial s}(s,1)\rangle_{\eta(s,1)}
\end{equation}
where $D_s$ is the covariant derivative along the path $\eta(\cdot,1)$.
Note that the first term on the right-hand side of \eqref{eq:Hessian} is non-negative, due to the convexity of $f$ (see \cite[Theorem 6.2]{udricste1994convex}). Therefore, 
\begin{align}
	\phi''(0) & \geq \langle \nabla f(\eta(0,1)),D_s\frac{\partial \eta}{\partial s}(0,1)\rangle_{\eta(0,1)} \nonumber \geq - \|\nabla f(\eta(0,1))\| \|D_s\frac{\partial \eta}{\partial s}(0,1)\|\nonumber \\
	& \geq -L\|D_s\frac{\partial \eta}{\partial s}(0,1)\| \label{eq:Hessian1} 
\end{align}
where the second inequality follows from Cauchy-Schwarz inequality and the last one follows from the assumption that $f$ is $L$-Lipschitz on $K$. 

For all $w,w'\in T_{x_0}M$, let $\Gamma_{w,w'}(s,t)=\Exp_{x_0}(t(w+sw')), s,t\in [0,1]$. In particular, $\eta=\Gamma_{u,v}$. Then, $D_s\frac{\partial \Gamma_{u,v}}{\partial s}(0,1)=\|v\|^2 D_s\frac{\partial \Gamma_{u,\tilde v}}{\partial s}(0,1)$, where $\tilde v=\frac{v}{\|v\|_{x_0}}$. Since the map $w,w'\mapsto D_s\frac{\partial \Gamma_{w,w'}}{\partial s}(0,1)$ is continuous, there exists a constant $C>0$ (that only depends on $r$ and on the geometry of $(M,g)$) such that 
$$\|D_s\frac{\partial \Gamma_{w,w'}}{\partial s}(0,1)\|\leq C,$$
for all $u,v\in T_{x_0}M$ with $\|u\|_{x_0}\leq r$ and $\|v\|_{x_0}\leq 1$. Hence, \eqref{eq:Hessian1} yields that $\DS \phi''(0)\geq -LC\|v\|_{x_0}^2$. Therefore, by setting $\alpha_1=LC$, we have established that $F+\frac{\alpha_1}{2}\|\cdot\|_{x_0}^2$ is convex on $\{u\in T_{x_0}M:\|u\|_{x_0}\leq r\}$.

\vspace{2mm}
\paragraph{Step 2: Case where $f$ is no longer assumed to be smooth.}

\subparagraph{Step 2.1: Smooth approximation of $f$.} 
For all positive integers $n$, let $f_n:M\to\R$ be the function defined by 
$$f_n(x)=n^d \int_{T_xM}f(\Exp_x(v))k(n\|v\|_x)\diff\Omega_x(v)$$
where $k:\R\to\R$ is a non-negative, smooth function supported on $[0,1]$ with $\int_\R k(t)\diff t=1$ and $\Omega_x$ is the pushforward measure of the Riemannian volume onto $T_xM$ by the exponential map $\Exp_x$. By \cite[Theorem 2]{greene1973subharmonicity}, each $f_n$ is smooth and the sequence $(f_n)_{n\geq 1}$ converges to $f$ uniformly on $K$ and satisfies
$$\liminf_{n\to\infty} \inf_{x\in K}\lambda_{\min}(\Hess f_n(x)) \geq 0,$$
where $\lambda_{\min}$ stands for the smallest eigenvalue.  Fix $\varepsilon>0$, that we will then choose to be small enough. Without loss of generality, assume that $\inf_{x\in K}\lambda_{\min}(\Hess f_n(x)) \geq -\varepsilon$, for all $n\geq 1$ (since one might as well forget about the first terms of the sequence $(f_n)_{n\geq 1}$). 

\vspace{2mm}
\subparagraph{Step 2.2: $f_n$ is Lipschitz on $B(x_0,r/2)$.}

Let us show that$f_n$ is Lipschitz on $B(x_0,r/2)$, so long as $n$ is large enough. Let $x,y\in B(x_0,r/2)$ and let $\pi_{x,y}:T_xM\to T_yM$ be the parallel transport map from $x$ to $y$. Since $\pi_{x,y}$ is an isometry, it maps the measure $\Omega_x$ to $\Omega_y$, hence, it holds that 
$$f_n(y)=n^d \int_{T_xM}f(\Exp_y(\pi_{x,y}(v)))k(n\|v\|_x)\diff\Omega_x(v).$$
Thus,
\begin{align}
	|f_n(x)-f_n(y)| & = n^d\left|\int_{T_xM} (f(\Exp_x(v))-f(\Exp_y(\pi_{x,y}(v)))) k(n\|v\|_{x})\right|\diff\Omega_x(v) \nonumber \\
	& \leq n^d \int_{T_xM} \left|f(\Exp_x(v))-f(\Exp_y(\pi_{x,y}(v)))\right| k(n\|v\|_{x}) \diff\Omega_x(v) \nonumber \\
	& \leq Ln^d \int_{T_xM} d(\Exp_x(v),\Exp_y(\pi_{x,y}(v)) k(n\|v\|_{x}) \diff\Omega_x(v) \label{eq:fnLip}
\end{align}
where the last inequality holds as soon as $n$ is large enough (i.e., $n\geq 2/r$) since $f$ is $L$-Lipschitz on $K=B(x_0,r)$ and in the last integral, only the vectors $v\in T_xM$ with $\|v\|_{x_0}\leq n^{-1}\leq r/2$ contribute to the integral. 
Finally, the map $(x,y,v)\mapsto \Exp_y(\pi_{x,y}(v))$ is smooth, hence it is Lipschitz on any compact set, so there exists $C>0$ that is independent of $x$ and $y$ such that for all $v\in T_xM$ with $\|v\|_{x_0}\leq r/2$, $d(\Exp_x(v),\Exp_y(\pi_{x,y}(v))\leq Cd(x,y)$. Therefore, \eqref{eq:fnLip} yields 
\begin{equation}
	|f_n(x)-f_n(y)| \leq LCd(x,y)n^d \int_{T_xM} k(n\|v\|_{x}) \diff\Omega_x(v) = LCd(x,y), 
\end{equation}
for all $x,y\in B(x_0,r/2)$. Thus, $f_n$ is $CL$-Lipschitz on $B(x_0,r/2)$, for all integers $n\geq 2/r$. 

\vspace{2mm}
\subparagraph{Step 2.3: Add a fixed function to each $f_n$ to make them convex.}

Assume that $r$ and $\varepsilon$ are chosen according to Lemma \ref{lemma:ExistStrConv} below and let $\phi$ be the function given in that lemma. Then, for all $n\geq 1$, $f_n+\phi$ is convex on $K$, since its Hessian is positive semi-definite at any $x\in K$. 

\vspace{2mm}
\subparagraph{Step 2.4: Apply the smooth case (Step 1) to each $f_n+\phi$ on $B(x_0,r/2)$.}

For all $n\geq 1$, $f_n+\phi$ is smooth. Moreover, $f_n$ is $CL$-Lipschitz on $B(x_0,r/2)$, where $C>0$ does not depend on $f$, and $\phi$ is smooth so it is $L'$-Lipschitz on $B(x_0,r/2)$, for some $L'>0$ that does not depend on $f$ either. Therefore, applying the result proven in Step 1, we obtain that there exists $\alpha_2>0$ that does not depend on $f$, such that for all large enough integers $n$, $f_n\circ\Exp_{x_0}+\phi\circ\Exp_{x_0}+\frac{\alpha_2}{2}\|\cdot\|_{x_0}^2$ is convex on $\{u\in T_{x_0}M: \|u\|_{x_0}\leq r/2\}$. By taking the limit as $n\to\infty$, we obtain that $f\circ\Exp_{x_0}+\phi\circ\Exp_{x_0}+\frac{\alpha_2}{2}\|\cdot\|_{x_0}^2$ is convex on $\{u\in T_{x_0}M: \|u\|_{x_0}\leq r/2\}$. 

Since $\phi$ is smooth, so is $\phi\circ\Exp_{x_0}$, so there exists $\beta>0$ such that $\lambda_{\max}\left(\bar\Hess(\phi\circ\Exp_{x_0})(u)\right)\leq \beta$, for all $u\in T_{x_0}M$ with $\|u\|_{x_0}\leq r/2$. Therefore, by setting $\alpha_3=\alpha_2+\beta$, we obtain that $f\circ\Exp_{x_0}+\frac{\alpha_3}{2}\|\cdot\|_{x_0}^2$ is convex on $\{u\in T_{x_0}M: \|u\|_{x_0}\leq r/2\}$. 

\vspace{2mm}
\subparagraph{Step 3: Conclusion.}

We can now combine all cases by taking $\alpha=\max(\alpha_1,\alpha_3)$.

\end{myproof}

\begin{lemma} \label{lemma:ExistStrConv}

	Let $(M,g)$ be a complete Riemannian manifold without boundary and $x_0\in M$. There exists $r,\varepsilon>0$ and a smooth function $\phi:M\to\R$ such that $\lambda_{\min} (\Hess\phi(x)) \geq \varepsilon$ for all $x\in B(x_0,r)$. 
	
\end{lemma}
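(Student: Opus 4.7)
The plan is to build $\phi$ as a smoothly cut off squared distance function. Specifically, let $\phi_0(x) = \tfrac{1}{2} d(x_0, x)^2$. Since $M$ is a Riemannian manifold, the exponential map $\Exp_{x_0}$ restricts to a diffeomorphism from a neighborhood of $0 \in T_{x_0} M$ onto a normal ball $B(x_0, \rho)$, where $\rho$ is at most the injectivity radius at $x_0$ (finite and positive by completeness). On such a normal ball, $\phi_0(x) = \tfrac{1}{2}\|\Log_{x_0}(x)\|_{x_0}^2$, which is smooth in $x$.

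The key computation is that $\Hess \phi_0(x_0) = \mathrm{Id}_{T_{x_0} M}$. In normal coordinates centered at $x_0$, $\phi_0$ becomes $\tfrac{1}{2} \sum_i (x^i)^2$, and the Christoffel symbols vanish at the origin, so the covariant Hessian coincides with the ordinary Hessian at $x_0$, which is the identity. Since $\phi_0$ is smooth on $B(x_0,\rho)$, the map $x \mapsto \Hess \phi_0(x)$ is continuous (as a section of the bundle of symmetric bilinear forms on $TM$), so there exists $r_0 \in (0, \rho/2)$ such that $\lambda_{\min}(\Hess \phi_0(x)) \geq 1/2$ for all $x \in B(x_0, 2 r_0)$.

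To promote $\phi_0$ into a globally smooth function $\phi$ on $M$, I would multiply by a smooth cutoff. Choose a smooth function $\chi : M \to [0,1]$ supported in $B(x_0, 2 r_0)$ and identically equal to $1$ on $B(x_0, r_0)$; such a $\chi$ can be constructed by composing a standard Euclidean bump function on $T_{x_0} M$ with $\Log_{x_0}$ on $B(x_0, 2 r_0)$ and extending by $0$. Set
\[
\phi(x) = \begin{cases} \chi(x)\, \phi_0(x) & x \in B(x_0, 2 r_0), \\ 0 & \text{otherwise.} \end{cases}
\]
Then $\phi$ is smooth on $M$ (the two definitions agree on the overlap), and on $B(x_0, r_0)$ we have $\phi \equiv \phi_0$, so $\lambda_{\min}(\Hess \phi(x)) \geq 1/2$ there. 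Taking $r = r_0$ and $\varepsilon = 1/2$ yields the claim.

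There is no substantive obstacle here; the only care required is to verify that the squared distance function is smooth on a normal neighborhood and that its Hessian at the center is the identity, both of which are classical facts that follow from the behavior of normal coordinates at the base point. The cutoff step then decouples the local Hessian control from any global regularity issues (such as the cut locus of $x_0$).
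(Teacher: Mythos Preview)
Your argument is correct. The computation $\Hess\phi_0(x_0)=\mathrm{Id}$ in normal coordinates, followed by continuity of the Hessian and a smooth cutoff, is a clean and self-contained way to establish the lemma.

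The paper proceeds differently: it bounds the sectional curvatures on a small simply connected ball $B(x_0,r_1)$ by some $\kappa_0$, invokes comparison geometry to conclude that this ball is $\CAT(\kappa_0)$, and then uses Ohta's result (Lemma~\ref{lemma:convCAT}) that in a $\CAT(\kappa_0)$ space of small diameter the function $\tfrac{1}{2}d(\cdot,x_0)^2$ is $\alpha$-strongly convex with an explicit $\alpha=\alpha_{2r,\kappa_0}$. This strong convexity translates into the desired Hessian lower bound on the whole ball at once. Your route is more elementary---it avoids the CAT machinery entirely and relies only on a Taylor expansion at the center---whereas the paper's route yields a constant that is explicit in terms of the local curvature bound, and gives the Hessian bound uniformly on the ball rather than inferring it from continuity at a single point. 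For the mere existence statement of the lemma, your argument is arguably preferable; you also handle the global smoothness of $\phi$ on $M$ via the cutoff, a detail the paper leaves implicit.
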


\begin{proof}

	Let $r_1$ such that $B(x_0,r_1)$ is simply connected. For all $x\in M$ and for all linearly independent vectors $u,v\in T_xM$ with $\|u\|_x=\|v\|_x=1$, let $\kappa(x,u,v)$ be the sectional curvature of $M$ at $x$, along the $2$-plane spanned by $u$ and $v$ (see \cite[Proposition 3.1 and Definition 3.2]{doCarmo1992riemannian}). Then, $\kappa$ is smooth, hence, there exists $\kappa_0<\infty$ that is an upper bound on all sectional curvatures of $M$ at points $x\in B(x_0,r_1)$, which is compact. Therefore, Lemma~\ref{lemma:RiemCAT} yields that $B(x_0,r_1)$ is $\CAT(\kappa_0)$. The desired result follows by taking: $r=\min(r_1,D_{\kappa_0}/4)$ and $\phi=\frac{\varepsilon}{2\alpha_{2r,\kappa_0}}d(\cdot,x_0)^2$, where $D_{\kappa_0}$ and $\alpha_{2r,\kappa_0}$ are defined in Lemma~\ref{lemma:convCAT}, see Appendix \ref{sec:AppendCurvBounds}.

\end{proof}

\bibliographystyle{plain}
\bibliography{Biblio}

\newpage 

\appendix

\section{On measurable selections of subgradients} \label{sec:AppendixAdditionalLemmas}

\begin{lemma} \label{lemma:MeasurableSelectionsSubgradients}
	Let $(\mathcal Z,\mathcal F,P)$ be a probability space and $(M,g)$ be a Riemannian manifold without boundary. Let $\phi:\mathcal Z\times M\to\R$ be a function such that 
	\begin{itemize}
		\item $\phi(z,\cdot)$ is convex for $P$-almost all $z\in \mathcal Z$;
		\item $\phi(\cdot,x)$ is measurable for all $x\in M$.
	\end{itemize}
Then, there exists a map $g:\mathcal Z\times M\to TM$ such that
	\begin{itemize}
		\item $g(z,x)\in T_xM$ is a subgradient of $\phi(z,\cdot)$ at $x$, for all $x\in M$, for $P$-almost all $z\in\mathcal Z$;
		\item $g(\cdot,x)$ is measurable, for all $x\in M$.
	\end{itemize}
\end{lemma}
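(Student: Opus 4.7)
The plan is to construct $g(\cdot, x)$ separately for each fixed $x \in M$, since the conclusion only requires measurability in $z$ for each individually fixed $x$. Once $x$ is fixed, $T_xM \cong \R^d$ is a finite-dimensional Euclidean space, so the task reduces to a standard measurable selection problem, which I will resolve via the Kuratowski--Ryll-Nardzewski theorem.

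Fix $x \in M$ and, for each $z$ in the full-$P$-measure set on which $\phi(z, \cdot)$ is convex, define
\[
\Gamma(z) := \left\{g \in T_xM : \phi(z, \Exp_x(v)) \geq \phi(z, x) + \langle g, v\rangle_x \text{ for all } v \in T_xM \right\},
\]
a closed convex subset of $T_xM$. The first substantive task is to verify $\Gamma(z) \neq \emptyset$. Lemma~\ref{lemma:123} guarantees that $\phi(z, \cdot)$ is locally Lipschitz; combined with the convexity of $t \mapsto \phi(z, \Exp_x(tv))$ along the geodesic $t \mapsto \Exp_x(tv)$, this yields that the directional derivative $v \mapsto \phi'_v(z, x) := \lim_{t \to 0^+} t^{-1}[\phi(z, \Exp_x(tv)) - \phi(z, x)]$ exists, is finite, and is sublinear in $v$. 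A Hahn--Banach dominance argument then produces a linear functional on $T_xM$ dominated by $\phi'_v(z, x)$, which via the inner product corresponds to an element $g \in T_xM$ satisfying the subgradient inequality; geodesic convexity along each ray upgrades this to the global inequality defining $\Gamma(z)$.

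Next I would establish measurability of the multifunction $\Gamma$. Continuity of $\phi(z, \cdot)$ (again from Lemma~\ref{lemma:123}) and of the pairing in $v$ allow replacement of the uncountable quantifier by a countable dense $D \subseteq T_xM$:
\[
\Gamma(z) = \bigcap_{v \in D} \left\{g \in T_xM : \phi(z, \Exp_x(v)) - \phi(z, x) - \langle g, v\rangle_x \geq 0\right\}.
\]
For each $v \in D$ the set inside is jointly measurable in $(z, g)$, since $\phi(\cdot, y)$ is measurable for each $y \in M$ by hypothesis and the constraint is affine (hence continuous) in $g$; countable intersections preserve this. Hence $\Gamma$ has a measurable graph, and the Kuratowski--Ryll-Nardzewski theorem delivers a measurable selection $g(\cdot, x): \mathcal Z \to T_xM$. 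On the $P$-null set where $\phi(z, \cdot)$ fails to be convex, one can simply set $g(z, x) := 0$.

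The main obstacle is really the nonemptiness of $\Gamma(z)$: without it the selection theorem has nothing to select. Nonemptiness rests crucially on Lemma~\ref{lemma:123} to rule out infinite directional derivatives and make Hahn--Banach applicable. Once nonemptiness is secured, the measurability step is routine --- a reduction to countably many measurable linear inequalities --- and the remainder is a direct invocation of the classical selection theorem.
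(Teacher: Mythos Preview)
Your approach is essentially the same as the paper's: fix $x$, define the subdifferential multifunction $\Gamma$ on $\mathcal Z$ with values in $T_xM$, establish nonemptiness (the paper cites \cite[Theorem 4.5]{udricste1994convex} rather than sketching Hahn--Banach, but your sketch is fine), reduce measurability to a countable dense family of linear constraints, and invoke a selection theorem. One small caveat: the Kuratowski--Ryll-Nardzewski theorem as usually stated requires \emph{weak} measurability of $\Gamma$, not merely a measurable graph, and these are not equivalent without completeness of $\mathcal F$; you should either complete the measure (harmless, since the conclusion is only $P$-a.s.) or, as the paper does, verify weak measurability directly---which your countable-intersection-of-half-spaces description makes straightforward.
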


\begin{proof}

Let $N\in \mathcal F$ be such that $P(N)=0$ and $\phi(z,\cdot)$ is convex on $M$ for all $z\in \mathcal Z\setminus N$. Set $\tilde{\mathcal Z}=\mathcal Z\setminus N$. It is enough to prove that for all $x\in M$, there exists a measurable function $g(\cdot,x)$ on $M$ such that $g(z,x)$ is a subgradient of $\phi(z,\cdot)$ at $x$, for all $z\in \tilde Z$. 

Fix $x_0\in M$. For all $z\in\mathcal Z$, let $\Gamma(z)$ be the set of all subgradients of $\phi(z,\cdot)$ at the point $x_0$. By  \cite[Theorem 4.5]{udricste1994convex}, $\Gamma(z)\neq\emptyset$, for all $z\in \tilde{\mathcal Z}$. Therefore, it is enough to show that the restriction of $\Gamma$ to $\tilde{\mathcal Z}$ is weakly measurable, i.e., that $\{z\in \tilde{\mathcal Z}:\Gamma(z)\cap U\neq\emptyset\}\in\mathcal F$, for all open subsets $U\subseteq T_{x_0}M$ (see \cite{himmelberg1982measurable} for the definition).

One can wrete $\Gamma(z)$ as 
\begin{align*}
	\Gamma(z) & = \bigcap_{v\in T_{x_0}M} \{u\in T_{x_0}M: \phi(z,\Exp_{x_0}(v))\geq \phi(z,x_0)+\langle u,v\rangle_{x_0}\} \\
	& = \bigcap_{v\in E_0} \{u\in T_{x_0}M: \phi(z,\Exp_{x_0}(v))\geq \phi(z,x_0)+\langle u,v\rangle_{x_0}\},
\end{align*}
where $E_0$ is some fixed countable, dense subset of $T_{x_0}M$. The last equality follows from the continuity of $\phi(z,\cdot)$, by Lemma~\ref{lemma:123}. For all $v\in E_0$, let $\Gamma_v(z)=\{u\in T_{x_0}M: \phi(z,\Exp_{x_0}(v))\geq \phi(z,x_0)+\langle u,v\rangle_{x_0}\}$. 

Since each of the countably many $\Gamma_v(z), v\in E_0$, is closed, \cite[Corollary 4.2]{himmelberg1982measurable} shows that it is enough to show that each $\Gamma_v$ is weakly measurable. Fix $v\in E_0$ and let $U$ be an open subset of $T_{x_0}M$. Then, for all $z\in \tilde{\mathcal Z}$, 
\begin{align*}
	\Gamma_v(z)\cap U\neq\emptyset & \iff \exists u\in U, \phi(z,\Exp_{x_0}(v))\geq \phi(z,x_0)+\langle u,v\rangle_{x_0} \\
	& \iff \exists u\in U\cap E_0, \phi(z,\Exp_{x_0}(v))\geq \phi(z,x_0)+\langle u,v\rangle_{x_0}.
\end{align*}
The last equivalence follows from the fact that since $U$ is open, $U\cap E_0$ is dense in $U$, and if some $u\in U$ satisfies $\phi(z,\Exp_{x_0}(v))\geq \phi(z,x_0)+\langle u,v\rangle_{x_0}$, then, any $u'\in U$ of the form $u'=u-v'$, for any $v'\in T_{x_0}M$ with small enough norm such that $\langle v,v'\rangle_{x_0}\leq 0$. There is at least one such $u'$ that falls in $U\cap E_0$. 

Finally, one obtains:
\begin{equation*}
	\{z\in\tilde{\mathcal Z}: \Gamma_v(z)\cap U\neq\emptyset\} = \bigcup_{u\in E_0} \{z\in\tilde{\mathcal Z}:\phi(z,\Exp_{x_0}(v))\geq \phi(z,x_0)+\langle u,v\rangle_{x_0}\},
\end{equation*}
which is in $\mathcal F$ since $\phi(\cdot,x)$ is assumed to be measurable for all $x\in M$. 

One concludes using Lemma~\ref{lemma:MeasurableSelections} below. 

\end{proof}

\begin{lemma} \label{lemma:MeasurableSelections}

	Let $(\mathcal Z,\mathcal F,P)$ be a probability space and $E$ a Polish space. Let $\Gamma$ be a function that maps any $z\in \mathcal Z$ to a closed subset of $E$. Assume the existence of $N\in \mathcal F$ with $P(N)=0$ and such that $\Gamma(z)\neq\emptyset$ for all $z\in\mathcal Z\setminus N$. Assume further that for all open subsets $U$ of $E$, $\{z\in\mathcal Z\setminus N:\Gamma(z)\cap U\neq \emptyset\} \in \mathcal F$. Then, there exists a function $g:\mathcal Z\to E$ such that $g(z)\in\Gamma(z)$, for $P$-almost all $z\in\mathcal Z$. 

\end{lemma}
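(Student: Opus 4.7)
The plan is to recognize this as a direct application of the classical Kuratowski--Ryll-Nardzewski measurable selection theorem, with only a minor bookkeeping step to handle the null set $N$. The hypotheses are already tailored to match the theorem: $E$ is Polish, the values $\Gamma(z)$ are closed and (after discarding $N$) nonempty, and the assumption that $\{z\in \mathcal Z\setminus N:\Gamma(z)\cap U\neq\emptyset\}\in \mathcal F$ for every open $U\subseteq E$ is precisely the standard weak measurability condition for multifunctions (as formulated, e.g., in \cite{himmelberg1982measurable}).

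Concretely, I would proceed as follows. First, I would restrict attention to the measurable set $\tilde{\mathcal Z}:=\mathcal Z\setminus N$, equipped with the trace $\sigma$-algebra $\tilde{\mathcal F}:=\{A\cap \tilde{\mathcal Z}:A\in\mathcal F\}$, and consider $\tilde \Gamma:=\Gamma|_{\tilde{\mathcal Z}}$, which by hypothesis is a closed-valued, nonempty-valued, weakly measurable multifunction from $(\tilde{\mathcal Z},\tilde{\mathcal F})$ to the Polish space $E$. Second, I would invoke the Kuratowski--Ryll-Nardzewski theorem (see, e.g., \cite{himmelberg1982measurable}) to obtain a measurable map $\tilde g:\tilde{\mathcal Z}\to E$ satisfying $\tilde g(z)\in \tilde\Gamma(z)$ for every $z\in\tilde{\mathcal Z}$. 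Finally, picking any fixed point $e_0\in E$, I would define $g:\mathcal Z\to E$ by $g(z)=\tilde g(z)$ if $z\in\tilde{\mathcal Z}$ and $g(z)=e_0$ if $z\in N$; since $N\in\mathcal F$ and $P(N)=0$, this $g$ is measurable and satisfies $g(z)\in\Gamma(z)$ for $P$-almost every $z\in\mathcal Z$.

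There is no real obstacle here beyond citing the appropriate selection theorem; the weakly-measurable-plus-closed-valued hypothesis packaged in the lemma statement was chosen exactly to make this invocation immediate. The only subtle point worth flagging is that the Kuratowski--Ryll-Nardzewski theorem requires the ambient space to be Polish (which is assumed) and the multifunction to have nonempty closed values on the whole domain; this is why $N$ must be excised before applying the theorem, and why the conclusion is stated almost surely rather than everywhere.
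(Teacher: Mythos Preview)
Your proposal is correct and matches the paper's proof essentially step for step: restrict to $\tilde{\mathcal Z}=\mathcal Z\setminus N$ with the trace $\sigma$-algebra, apply a measurable selection theorem (the paper cites \cite[Theorem 4.1]{wagner1977survey}, you cite the equivalent Kuratowski--Ryll-Nardzewski result via \cite{himmelberg1982measurable}), and then extend by a constant on $N$. There is nothing to add.
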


\begin{proof}

Consider the probability space $(\tilde{\mathcal Z},\tilde{\mathcal F},\tilde P)$ defined by setting 
\begin{itemize}
	\item $\tilde{\mathcal Z}=\mathcal Z\setminus N$;
	\item $\tilde{\mathcal F}=\{F\setminus N:F\in\mathcal F\}$;
	\item $\tilde P$ as the restriction of $P$ to $(\tilde{\mathcal Z},\tilde{\mathcal F})$. 
\end{itemize}
Then, for all $z\in \mathcal Z$, $\Gamma(z)\neq\emptyset$ so, thanks to \cite[Theorem 4.1]{wagner1977survey}, one can find a measurable selection $g$ of the restriction of $\Gamma$ to $\tilde{\mathcal Z}$. By setting $g(z)$ to be any constant value in $E$ for $z\in N$, we obtain a function $g:\mathcal Z\to E$ that satisfies the requirement. 

\end{proof}

\section{Elements on Riemannian manifolds and on geodesic convexity} \label{sec:AppendixRiemann}

In this section, let $(M,g)$ be a complete $d$-dimensional Riemannian manifold.. That is, $M$ is a $d$-dimensional smooth manifold and the Riemannian metric $g$ equips each tangent space $T_xM$, $x\in M$, with a dot product $g_x(u,v)$, $u,v\in T_xM$. Moreover, by Hopf-Rinow theorem \cite[Chapter 7, Theorem 2.8]{doCarmo1992riemannian}, completeness of $(M,d)$ ensures that any two points $x,y\in M$ are connected by at least minimizing geodesic, i.e., a smooth path $\gamma:[0,1]\to M$ such that $\gamma(0)=x, \gamma(1)=y$ and $d(\gamma(s),\gamma(t))=|s-t|d(x,y)$, for all $s,t\in [0,1]$. 

\subsection{On smooth functions and their derivatives}

If $f:M\to\R$ is a smooth function, its gradient is the map $\nabla f$ such that for all $x\in M$, $\nabla f(x)\in T_xM$ and for all $u\in T_xM$, 
$$\langle\nabla f(x),u\rangle_x = (f\circ\gamma)'(0)$$
where $\gamma:[0,1]\to M$ is any smooth path such that $\gamma'(0)=u$. 
The Hessian of $f$ at any $x\in M$ is represented by the linear map $\Hess f(x):T_xM\to T_xM$ such that for all $u\in T_xM$, 
$$\langle u,\Hess f(x)u\rangle_x=(f\circ\gamma)''(0)$$
where $\gamma$ is a geodesic path with $\gamma(0)=x$, $\gamma'(0)=u$. 

With our notation, any smooth function $f:M\to\R$ has a second order Taylor expansion at any given point $x\in M$ of the following form:

$$f(\Exp_{x}(tu))=f(x)+t\langle\nabla f(x),u\rangle_{x} +\frac{t^2}{2}\langle u,\Hess f(x)u \rangle_{x} +o(t^2)$$
as $t\to 0$, for any fixed $u\in T_{x}M$. 

\subsection{On convex functions}

Recall that a convex function on $M$ is any function $f:M\to \R$ that satisfies $f(\gamma(t))\leq (1-t) f(x)+tf(y)$ for all $t\in [0,1]$, $x,y\in M$ and for all constant speed, minimizing geodesic $\gamma:[0,1]\to M$ from $x$ to $y$. Just as in Euclidean spaces, convex functions are automatically continuous \cite[Theorem 3.6]{udricste1994convex} (note that we only consider convex functions on the whole space $M$, which is assumed to have no boundary). The notion of subgradients can also be extended to the Riemannian case.

\begin{definition}
	Let $f:M\to\R$ be a convex function and $x\in M$. A vector $v\in T_{x}M$ is called a subgradient of $M$ at $x$ if and only if 
	$$f(\Exp_{x}(u))\geq f(x)+\langle v,u\rangle_x,$$
for all $u\in T_{x}(M)$. 
\end{definition}

The set of all subgradients of $f$ at $x\in M$ is denoted by $\partial f(x)$ and \cite[Theorem 4.5]{udricste1994convex} ensures that $\partial f(x)\neq \emptyset$, for all $x\in M$ (again, note that we only consider the case when $M$ has no boundary). For simplicity here, unlike in \cite{udricste1994convex}, subgradients are elements of the tangent space, not its dual.

Subgradients of convex functions satisfy the following monotonicity property.

\begin{lemma} \label{lemma:monotonsubgr}

	Let $f:M\to\R$ be a convex function. Let $x\in M$, $u\in T_xM$ and set $y=\Exp_x(u)$. Let $g_1\in \partial f(x)$ and $g_2\in\partial f(y)$. Then, 
	$$\langle u, \pi_{x,y}^{-1}(g_2)-g_1\rangle_x\geq 0,$$
where $\pi_{x,y}$ is the parallel transport from $x$ to $y$ through the geodesic given by $\gamma(t)=\Exp_x(tu), t\in [0,1]$. 
\end{lemma}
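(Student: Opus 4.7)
The plan is to simply sum the subgradient inequalities at the two endpoints $x$ and $y$, and use that parallel transport along $\gamma$ is a linear isometry in order to bring both inner products into the common space $T_xM$. This is the direct Riemannian transcription of the familiar Euclidean monotonicity inequality $\langle y-x, g_2-g_1\rangle \geq 0$.

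More concretely, let $\gamma:[0,1]\to M$ be the geodesic $\gamma(t)=\Exp_x(tu)$, so that $\gamma(0)=x$, $\gamma(1)=y$ and $\pi_{x,y}$ is parallel transport along $\gamma$. First, since $g_1 \in \partial f(x)$ and $y = \Exp_x(u)$, the definition of subgradient gives
\begin{equation*}
f(y) \;\geq\; f(x) + \langle g_1, u\rangle_x.
\end{equation*}
Next, I want to apply the subgradient inequality at $y$ along the reverse geodesic $\tilde\gamma(t)=\gamma(1-t)$, which starts at $y$ and ends at $x$. Using the relation between parallel transport and geodesic tangents recalled in Section~2.2, the initial velocity $\dot{\tilde\gamma}(0)$ is the vector $w \in T_yM$ satisfying $\Exp_y(w)=x$, and one checks that $w$ is exactly the parallel transport of $-u$ from $x$ to $y$ along $\gamma$, so that $w = -\pi_{x,y}(u)$ (up to the sign convention adopted in the paper). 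Applying the subgradient inequality for $g_2 \in \partial f(y)$ with this vector $w$ yields
\begin{equation*}
f(x) \;\geq\; f(y) + \langle g_2, -\pi_{x,y}(u)\rangle_y.
\end{equation*}

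Adding the two inequalities and rearranging gives
\begin{equation*}
0 \;\leq\; \langle g_2, \pi_{x,y}(u)\rangle_y - \langle g_1, u\rangle_x.
\end{equation*}
Since $\pi_{x,y}:T_xM\to T_yM$ is a linear isometry, $\langle g_2, \pi_{x,y}(u)\rangle_y = \langle \pi_{x,y}^{-1}(g_2), u\rangle_x$, which gives exactly the claimed inequality $\langle u, \pi_{x,y}^{-1}(g_2)-g_1\rangle_x \geq 0$.

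The only delicate point is the bookkeeping around the reverse geodesic and the sign convention for parallel transport; once that is set, no estimate is needed beyond the two subgradient inequalities and the isometry property of $\pi_{x,y}$. In particular, no smoothness of $f$ and no second-order information is used, so this lemma is purely a restatement of the Euclidean monotonicity fact in intrinsic Riemannian language.
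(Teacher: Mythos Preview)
Your proof is correct and essentially identical to the paper's: both apply the subgradient inequality at $x$ (with tangent $u$) and at $y$ (with tangent $-\pi_{x,y}(u)$, i.e., the initial velocity of the reversed geodesic), then sum and use that $\pi_{x,y}$ is an isometry to pull both inner products back to $T_xM$. The only cosmetic difference is that the paper writes $v=\pi_{x,y}(u)$ and $x=\Exp_y(-v)$ directly rather than introducing the reverse geodesic $\tilde\gamma$.
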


\begin{proof}

First, note that by letting $v=\pi_{x,y}(u)$, it holds that $x=\Exp_{y}(-v)$. By definition of subgradients, one has the following inequalities:
$$f(y)\geq f(x)+\langle u,g_1\rangle_x$$
and
$$f(x)\geq f(y)+\langle -v,g_2\rangle_y = f(y)-\langle u,\pi_{x,y}^{-1}(g_2)\rangle_x,$$
using the fact that $\pi_{x,y}$ is an isometry. Summing both inequalities yields the result.

\end{proof}

%
%
%

\section{On sequences of convex functions in metric spaces} \label{sec:AppendSeqConvexFns}

\subsection{Proof of Lemma \ref{lemma:RockafellarRandom}}

\paragraph{Almost sure convergence}
Since $(M,d)$ is a proper metric space, it is separable \cite[Proposition 2.2.2]{alexander2019alexandrov}. Let $M_0$ be a countable dense subset of $M$. By assumption, $F_n(x)\xrightarrow[n\to\infty]{} F(x)$ almost surely, for all $x\in M_0$. Therefore, with probability one, the convergence holds for all $x\in M_0$, since $M_0$ is countable. Hence, because $M_0$ is also dense, Theorem \ref{lemma:Rockafellar} applies and we obtain the desired result.

\paragraph{Convergence in probability}
Here, we use the following fact.
\begin{lemma} \cite[Lemma 2 (ii) p.67]{chow2003probability}
	Let $X,X_1,X_2,\ldots$ be real valued random variables. Then, $X_n$ converges in probability to $X$ if and only if every subsequence of $(X_n)_{n\geq 1}$ has itself a subsequence that converges almost surely to $X$.
\end{lemma}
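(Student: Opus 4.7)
The plan is to prove both implications of this classical characterization directly: the forward direction via the Borel--Cantelli lemma, and the reverse direction by a standard contradiction argument on subsequences.

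For the forward implication, assume $X_n \to X$ in probability. Given any subsequence $(X_{n_k})_{k \geq 1}$, it still converges in probability to $X$, so for each integer $j \geq 1$ we can choose indices $k_j$, strictly increasing in $j$, such that $\PP(|X_{n_{k_j}} - X| > 2^{-j}) \leq 2^{-j}$. Since $\sum_j 2^{-j} < \infty$, the Borel--Cantelli lemma yields that almost surely $|X_{n_{k_j}} - X| \leq 2^{-j}$ for all sufficiently large $j$, and in particular $X_{n_{k_j}} \to X$ almost surely.

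For the reverse implication, suppose every subsequence has a further subsequence converging almost surely to $X$, and assume, for contradiction, that $X_n$ does not converge to $X$ in probability. Then there exist $\varepsilon, \delta > 0$ and a subsequence $(X_{n_k})$ such that $\PP(|X_{n_k} - X| > \varepsilon) \geq \delta$ for every $k$. By hypothesis, $(X_{n_k})$ admits a further subsequence $(X_{n_{k_j}})$ converging almost surely, hence in probability, to $X$; this would force $\PP(|X_{n_{k_j}} - X| > \varepsilon) \to 0$, contradicting the uniform lower bound $\delta$.

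Since the lemma is classical and cited from a textbook, there is no substantive obstacle: the only nontrivial ingredient is Borel--Cantelli in the forward direction, and the rest is bookkeeping on subsequence indices. The reason this lemma is invoked at this point of the paper is also worth noting in the proposal: once available, it reduces the convergence-in-probability case of Lemma~\ref{lemma:RockafellarRandom} to the already-settled almost-sure case, by diagonally extracting a subsubsequence along which $F_n(x) \to F(x)$ a.s. for every $x$ in a countable dense subset of $M$ and then invoking the almost-sure half of Lemma~\ref{lemma:RockafellarRandom} on that subsubsequence.
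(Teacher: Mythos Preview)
Your proof is correct and is the standard textbook argument. The paper does not actually prove this lemma: it is merely stated with a citation to \cite{chow2003probability}, so there is no approach in the paper to compare yours against. Your final paragraph accurately describes how the lemma is used in the proof of Lemma~\ref{lemma:RockafellarRandom}.
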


Let $K$ be a compact subset of $M$ and consider a subsequence of $(\sup_{x\in K}|F_n(x)-F(x)|)_{n\geq 1}$. Without loss of generality (one could renumber that subsequence), we actually consider the whole sequence $(\sup_{x\in K}|F_n(x)-F(x)|)_{n\geq 1}$ and we show that it admits a subsequence that converges almost surely to zero. Let $M_0$ be a countable dense subset of $M$. Denote the elements of $M_0$ as $x_1,x_2,\ldots$. 

By assumption, $F_n(x_1)$ converges to $F(x_1)$ in probability, therefore, it has a subsequence, say $F_{\phi_1(n)}(x_1)$, that converges to $F(x_1)$ almost surely.

Now, again by assumption, $F_{\phi_1(n)}(x_2)$ converges to $F(x_2)$ in probability, therefore, it has a subsequence, say $F_{\phi_1\circ\phi_2(n)}(x_2)$, that converges to $F(x_2)$ almost surely.

By iterating this procedure, we build, for all integers $k\geq 1$, an increasing function $\phi_k:\N^*\to\N^*$ such that $F_{\phi_1\circ\ldots\circ\phi_k(n)}(x_k)$ converges almost surely to $F(x_k)$, as $n\to\infty$. 

Finally, let $\psi:\N^*\to\N^*$ the function defined by $\psi(n)=\phi_1\circ\ldots\circ\phi_n(n)$, for all $n\geq 1$. Then, by construction, $F_{\psi(n)}(x_k)$ converges almost surely to $F(x_k)$, for all $k\geq 1$. Hence, with probability $1$, $F_{\psi(n)}(x)$ converges almost surely to $F(x)$ for all $x\in M_0$. Lemma \ref{lemma:Rockafellar} yields that $\sup_{x\in K}|F_{\psi(n)}(x)-F(x)|$ converges to zero almost surely, which concludes the proof.

\section{A brief introduction to Alexandrov's curvature} \label{sec:AppendCAT}

In this section, we give the precise definition of CAT spaces. 

\subsection{Model spaces}

First, we introduce a family of model spaces that will allow us to define local and global curvature bounds in the sequel. Let $\kappa\in\R$. 

\paragraph{$\kappa=0$: Euclidean plane} 
Set $M_0=\R^2$, equipped with its Euclidean metric. This model space corresponds to zero curvature, is a geodesic space where geodesics are unique and given by line segments. 

\paragraph{$\kappa>0$: Sphere}
Set $M_\kappa=\frac{1}{\sqrt\kappa}\Sp^2$: This is the $2$-dimensional Euclidean sphere, embedded in $\R^3$, with center $0$ and radius $1/\sqrt\kappa$, equipped with the arc length metric: $d_\kappa(x,y)=\frac{1}{\sqrt\kappa}\arccos(\kappa x^\top y)$, for all $x,y\in M_\kappa$. This is a geodesic space where the geodesics are unique except for opposite points, and given by arcs of great circles. Here, a great circle is the intersection of the sphere with any plane going through the origin. 

\paragraph{$\kappa<0$: Hyperbolic space}
Set $M_\kappa=\frac{1}{\sqrt\kappa}\mathbb H^2$, where $\mathbb H_2=\{(x_1,x_2,x_3)\in\R^3:x_3>0, x_1^2+x_2^2-x_3^2=-1\}$. The metric is given by $d_\kappa=\frac{1}{\sqrt{-\kappa}}\textrm{arccosh}(-\kappa\langle x,y\rangle)$, for all $x,y\in M_\kappa$, where $\langle x,y\rangle=x_1y_1+x_2y_2-x_3y_3$. This is a geodesic space where geodesics are always unique and are given by arcs of the intersections of $M_\kappa$ with planes going through the origin. 

\vspace{4mm} 
For $\kappa\in\R$, let $D_\kappa$ be the diameter of the model space $M_\kappa$, i.e., 
$\DS D_\kappa=\begin{cases} \infty \mbox{ if } \kappa\leq 0 \\ \frac{\pi}{\sqrt\kappa} \mbox{ if } \kappa>0\end{cases}$.



\subsection{Curvature bounds} \label{sec:AppendCurvBounds}

Let $(M,d)$ be a geodesic space. The notion of curvature (lower or upper) bounds for $(M,d)$ is defined by comparing the triangles in $M$ with triangles with the same side lengths in model spaces. 

\begin{definition}
	A (geodesic) triangle in $M$ is a set of three points in $M$ (the vertices) together with three geodesic segments connecting them (the sides). 
\end{definition}

Given three points $x,y,z\in S$, we abusively denote by $\Delta(x,y,z)$ a triangle with vertices $x,y,z$, with no mention to which geodesic segments are chosen for the sides (geodesics between points are not necessarily unique, as seen for example on a sphere, between any two opposite points). The perimeter of a triangle $\Delta=\Delta(x,y,z)$ is defined as $\per(\Delta)=d(x,y)+d(y,z)+d(x,z)$. It does not depend on the choice of the sides. 

\begin{definition}
	Let $\kappa\in\R$ and $\Delta$ be a triangle in $M$ with $\per(\Delta)<2D_\kappa$. A comparison triangle for $\Delta$ in the model space $M_\kappa$ is a triangle $\bar\Delta\subseteq M_\kappa$ with same side lengths as $\Delta$, i.e., if $\Delta=\Delta(x,y,z)$, then $\bar\Delta=\Delta(\bar x,\bar y,\bar z)$ where $\bar x,\bar y,\bar z$ are any points in $M_\kappa$ satisfying 
	$$\begin{cases} d(x,y)=d_\kappa(\bar x,\bar y) \\ d(y,z)=d_\kappa(\bar y,\bar z) \\ d(x,z)=d_\kappa(\bar x,\bar z). \end{cases}$$
\end{definition}

Note that a comparison triangle in $M_\kappa$ is always unique up to rigid motions. We are now ready to define curvature bounds. Intuitively, we say that $(M,d)$ has global curvature bounded from above (resp. below) by $\kappa$ if all its triangles (with perimeter smaller than $2D_\kappa$) are thinner (resp. fatter) than their comparison triangles in the model space $M_\kappa$. 

\begin{definition}
	Let $\kappa\in\R$. We say that $(M,d)$ has global curvature bounded from above (resp. below) by $\kappa$ if and only if for all triangles $\Delta\subseteq M$ with $\per(\Delta)<2D_\kappa$ and for all $x,y\in \Delta$, $d(x,y)\leq d_\kappa(\bar x,\bar y)$ (resp. $d(x,y)\leq d_\kappa(\bar x,\bar y)$), where $\bar x$ and $\bar y$ are the points on a comparison triangle $\bar\Delta$ in $M_\kappa$ that correspond to $x$ and $y$. We then call $(M,d)$ a $\CAT(\kappa)$ (resp. $\CAT^+(\kappa)$) space. 
\end{definition}

\begin{lemma} \label{lemma:convCAT} \cite[Proposition 3.1]{Ohtaconvexity}

Let $(M,d)$ be $\CAT(\kappa)$ for some $\kappa\in\R$ and assume that $M$ has diameter $D<D_\kappa/2$. Then, for all $x_0\in M$, the function $d(\cdot,x_0)$ is convex on $M$. Moreover, set $\alpha_{D,\kappa}=\frac{\sqrt\kappa D}{\tan(\sqrt\kappa D)}$ if $\kappa>0$, $\alpha_{D,\kappa}=1$ otherwise. Then, the function $\frac{1}{2}d^2(\cdot,x_0)$ is $\alpha_{D,\kappa}$-strongly convex, i.e., it satisfies 
$$\frac{1}{2}d^2(\gamma(t),x_0) \leq \frac{1-t}{2}d^2(x,x_0)+\frac{t}{2}d^2(y,x_0)-\frac{\alpha_{D,\kappa} t(1-t)}{4}d(x,y)^2,$$
for all $x,y\in M$ and $\gamma\in\Gamma_{x,y}$. 

\end{lemma}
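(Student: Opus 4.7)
The strategy is to transfer both the convexity of $d(\cdot,x_0)$ and the strong convexity of $\tfrac12 d(\cdot,x_0)^2$ from the model space $M_\kappa$ to $(M,d)$ via the $\CAT(\kappa)$ comparison. The enabling observation is that any triangle $\Delta(x_0,x,y)$ in $M$ has perimeter at most $3D<3D_\kappa/2<2D_\kappa$, so a comparison triangle in $M_\kappa$ exists.

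\textbf{Step 1 (reduction to the model space).} Fix $x,y\in M$ and $\gamma\in\Gamma_{x,y}$. Take a comparison triangle $\bar\Delta(\bar{x_0},\bar x,\bar y)\subset M_\kappa$ and let $\bar\gamma:[0,1]\to M_\kappa$ be the geodesic from $\bar x$ to $\bar y$, unique since $D<D_\kappa/2$ keeps the configuration away from the cut locus in $M_\kappa$. By construction $d(x,x_0)=d_\kappa(\bar x,\bar{x_0})$, $d(y,x_0)=d_\kappa(\bar y,\bar{x_0})$, $d(x,y)=d_\kappa(\bar x,\bar y)$, and the $\CAT(\kappa)$ property gives $d(\gamma(t),x_0)\leq d_\kappa(\bar\gamma(t),\bar{x_0})$ for every $t\in[0,1]$. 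Since $u\mapsto u$ and $u\mapsto u^2$ are non-decreasing on $[0,\infty)$, it suffices to prove the analogous convexity and strong convexity inequalities for $\bar r:=d_\kappa(\cdot,\bar{x_0})$ and $\tfrac12\bar r^{\,2}$ along $\bar\gamma$ inside the ball $\bar B:=B(\bar{x_0},D)\subset M_\kappa$.

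\textbf{Step 2 (Hessians in the model space).} Away from $\bar{x_0}$, standard Jacobi-field computations on $M_\kappa$ give
\begin{align*}
\Hess\bar r&=c_\kappa(\bar r)\,\bigl(I-d\bar r\otimes d\bar r\bigr),\\
\Hess\tfrac12\bar r^{\,2}&=d\bar r\otimes d\bar r+\bar r\,c_\kappa(\bar r)\,\bigl(I-d\bar r\otimes d\bar r\bigr),
\end{align*}
where $I$ denotes the Riemannian metric of $M_\kappa$, $c_\kappa(\rho)=\sqrt\kappa\cot(\sqrt\kappa\rho)$ for $\kappa>0$, $c_\kappa(\rho)=\sqrt{-\kappa}\coth(\sqrt{-\kappa}\rho)$ for $\kappa<0$, and $c_0(\rho)=1/\rho$. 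Since $c_\kappa(\bar r)>0$ on $(0,D_\kappa/2)$, $\Hess\bar r\succeq0$ on $\bar B$, establishing the convexity of $\bar r$ and, by Step 1, of $d(\cdot,x_0)$ on $M$. The eigenvalues of $\Hess\tfrac12\bar r^{\,2}$ are $1$ (along $\nabla\bar r$) and $\bar r\,c_\kappa(\bar r)$ (tangentially). For $\kappa\leq0$, $\bar r\,c_\kappa(\bar r)\geq1$, so $\Hess\tfrac12\bar r^{\,2}\succeq I$, giving $\alpha_{D,\kappa}=1$. For $\kappa>0$, the monotonicity of $x\mapsto x\cot x$ on $(0,\pi/2)$ yields $\bar r\,c_\kappa(\bar r)\geq\sqrt\kappa D\cot(\sqrt\kappa D)=\alpha_{D,\kappa}$ on $\bar B$. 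Integrating the pointwise bound $\Hess\tfrac12\bar r^{\,2}\succeq\alpha_{D,\kappa}\,I$ along $\bar\gamma\subset\bar B$ yields the strong convexity inequality in $M_\kappa$ with the even stronger constant $\tfrac{\alpha_{D,\kappa}}{2}$ in front of $t(1-t)d_\kappa(\bar x,\bar y)^2$, hence a fortiori with the $\tfrac{\alpha_{D,\kappa}}{4}$ appearing in the statement. Step 1 then transports this inequality to $(M,d)$.

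\textbf{Main obstacle.} The most delicate point is the spherical case $\kappa>0$: one must (i) verify the Hessian formula and the monotonicity of $x\cot x$ that dictates the sharp constant $\alpha_{D,\kappa}$, and (ii) ensure that $\bar\gamma$ remains inside $\bar B$, which follows from the classical geodesic convexity of metric balls of radius less than $D_\kappa/2$ in the model space $M_\kappa$.
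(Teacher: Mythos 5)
The paper does not actually prove this lemma; it is cited verbatim from Ohta's paper, so there is no in-text argument to compare against. Evaluated on its own, your proof is correct and follows the standard comparison-geometry route one would expect Ohta to use: the $\CAT(\kappa)$ comparison inequality reduces everything to the model space $M_\kappa$, where one computes the Hessian of $\bar r$ and $\tfrac12\bar r^{\,2}$ via Jacobi fields, bounds the smallest eigenvalue uniformly on the ball of radius $D$, and integrates along the geodesic. The auxiliary points you flag are exactly the ones that need checking (the monotonicity of $x\cot x$ on $(0,\pi/2)$, which gives the worst-case eigenvalue at $\bar r=D$, and geodesic convexity of balls of radius $<D_\kappa/2$ in $M_\kappa$ so that the Hessian bound is available along the whole comparison geodesic), and both are handled correctly. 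The transfer in Step~1 is legitimate because $\CAT(\kappa)$ gives $d(\gamma(t),x_0)\leq d_\kappa(\bar\gamma(t),\bar x_0)$ while the three side lengths of $\Delta(x_0,x,y)$ match those of $\bar\Delta$ exactly, so the inequality pushes forward termwise.

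One remark worth recording: your integration of $\Hess\tfrac12\bar r^{\,2}\succeq\alpha_{D,\kappa}\,I$ along $\bar\gamma$ yields the coefficient $\tfrac{\alpha_{D,\kappa}}{2}$ in front of $t(1-t)\,d(x,y)^2$, not the $\tfrac{\alpha_{D,\kappa}}{4}$ stated in the lemma. You observe correctly that this is strictly stronger and hence implies the lemma a fortiori, but it is more than a curiosity: in the Euclidean case $\kappa=0$, $\alpha_{D,0}=1$ and your bound becomes an equality, whereas the lemma's $\tfrac14$ coefficient gives a strictly suboptimal inequality there. The lemma's normalization thus carries a factor-of-two slack relative to what both your argument and Ohta's original Proposition 3.1 actually deliver. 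This does not affect the correctness of the lemma as a sufficient ingredient for the paper (only $\alpha_{D,\kappa}>0$ is used in Lemma~\ref{lemma:ExistStrConv}), but your proof is tight where the statement is not.
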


\subsection{Examples of CAT spaces}

In this section, we give a few examples of CAT spaces. First, the following lemma, which is a direct consequence of \cite[Theorem 1A.6]{bridson2013metric} together with \cite[Theorem 9.2.9]{burago2001course}, give a whole class of Riemannian manifolds as CAT spaces.

\begin{lemma} \label{lemma:RiemCAT}
	Let $(M,g)$ be a simply connected Riemannian manifold. Assume that all the sectional curvatures are bounded from above by some $\kappa\in \R$. Then, $M$ is $\CAT(\kappa)$.
\end{lemma}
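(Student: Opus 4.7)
The plan is to combine a local curvature comparison with a local-to-global principle for CAT$(\kappa)$ spaces; the two cited theorems correspond exactly to these two steps.

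\textbf{Step 1: Local CAT$(\kappa)$ property.} I would first establish that every point $p\in M$ admits an open neighborhood $U_p$ such that $(U_p,d|_{U_p})$, equipped with the induced length metric, is a CAT$(\kappa)$ space in the sense of Appendix~\ref{sec:AppendCurvBounds}. This is the content of Theorem 9.2.9 in \cite{burago2001course}, and is essentially an application of Rauch's comparison theorem: when sectional curvatures are bounded above by $\kappa$, Jacobi fields along a geodesic grow at least as fast as they would in the constant-curvature model $M_\kappa$, and this translates, upon integration, into the distance comparison $d(x,y)\leq d_\kappa(\bar x,\bar y)$ for points $x,y$ on the sides of a sufficiently small geodesic triangle. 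Taking $U_p$ to be a totally normal ball of small enough radius ensures both that geodesics between points of $U_p$ are unique and minimizing and that the perimeter of any triangle in $U_p$ is less than $2D_\kappa$, so that the comparison triangle in $M_\kappa$ is well defined.

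\textbf{Step 2: Upgrade to global CAT$(\kappa)$ via Cartan--Hadamard.} I would then invoke the Cartan--Hadamard-type theorem for CAT$(\kappa)$ spaces, namely Theorem 1A.6 in \cite{bridson2013metric}: any complete, simply connected geodesic space that is locally CAT$(\kappa)$ is globally CAT$(\kappa)$, where for $\kappa>0$ the comparison inequality is imposed only on triangles of perimeter strictly less than $2D_\kappa$ (which is precisely the convention adopted in Appendix~\ref{sec:AppendCurvBounds}). The space $M$ is a complete geodesic space by Hopf--Rinow and simply connected by hypothesis, so this theorem applies and yields the conclusion.

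The main obstacle, and the place where simple connectedness is essential, is the local-to-global step. When $\kappa\leq 0$ it is the classical Cartan--Hadamard theorem together with its CAT$(0)$ refinement; when $\kappa>0$ one needs Alexandrov's patchwork construction, which builds a comparison configuration for a large triangle by gluing small local comparison triangles along their edges, with simple connectedness ensuring that this construction is topologically unobstructed. Both pieces are classical and are precisely what the two cited theorems package, so tracing the hypotheses through Steps 1--2 completes the proof.
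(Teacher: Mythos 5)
Your two-step plan --- a local Rauch-type comparison to get the CAT$(\kappa)$ inequality for small triangles, then a Cartan--Hadamard-type globalization using completeness and simple connectedness --- is exactly what the paper's one-line citation is packaging, and the paper gives no further detail, so the mathematical content of your write-up is a faithful expansion of the intended argument. One bookkeeping remark: you appear to have interchanged the roles of the two references. In Bridson--Haefliger, Theorem II.1A.6 sits in the appendix to Chapter II.1 titled ``The Curvature of Riemannian Manifolds'' and is the \emph{local} statement that a smooth Riemannian manifold has curvature $\leq\kappa$ in the Alexandrov sense if and only if all its sectional curvatures are $\leq\kappa$; the Cartan--Hadamard-type globalization in that book lives in Chapter II.4, not in 1A.6. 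Correspondingly, Theorem 9.2.9 in Burago--Burago--Ivanov is the globalization statement, not the Riemannian-to-Alexandrov comparison. Swapping the labels in your Steps 1 and 2 would bring them in line with what those two numbered results actually assert; the argument itself is unchanged.
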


We give two examples that are of particular relevance in optimal transport and matrix analysis. 
\begin{enumerate}
	\item Let $M$ be the collection of all $p\times p$ real, symmetric positive definite matrices. Equip $M$ with the metric given by $d(A,B)=\|\log(A^{-1/2}BA^{-1/2})\|_F$, where $\log$ is the matrix logarithm and $\|\cdot\|_F$ is the Frobenius norm. Then, $d$ is inherited from a Riemannian metric and $(M,d)$ is $\CAT(0)$ \cite{bhatia2006riemannian}. This metric space is particularly important in the study of matrix geometric means. For any $A,B\in M$, their geometric mean is defined as $A\# B=A^{1/2}(A^{-1/2}BA^{-1/2})^{1/2}A^{1/2}$, and it is the unique midpoint from $A$ to $B$, i.e., $A\# B=\gamma(1/2)$, where $\gamma$ is the unique geodesic between $A$ and $B$. 
	\item Let $M$ be the collection of all $p\times p$ real, symmetric positive definite matrices with spectrum included in $[\lambda_0,\infty)$, for some $\lambda_0>0$. Equip $M$ with the Bures-Wasserstein metric, obtained as follows. For $A,B\in M$, let $d(A,B)=\textrm{W}_2(\mathcal N_p(0,A),\mathcal N_p(0,B))$, where $\textrm W_2$ is the Wasserstein distance and $\mathcal N_p(0,A)$ (resp. $\mathcal N_p(0,B)$) is the $p$-variate Gaussian distribution with mean $0$ and covariance matrix $A$ (resp. $B$). Then, $d$ is also inherited from a Riemannian metric \cite{bhatia2019bures} and $(M,d)$ is $\CAT(3/(2\lambda_0))$, by \cite[Proposition 2]{massart2019curvature}. 	
\end{enumerate}

Further examples are given below. 

\begin{itemize}
	\item Metric trees are $\CAT(\kappa)$ for any $\kappa\in\R$ (since all its triangles are flat) \cite[Section II.1.15]{bridson2013metric}
	\item The surface of a smooth convex body in a Euclidean space is $\CAT(\kappa)$, where $\kappa>0$ is an upper bound on the reach of the complement of the body, as a sconsequence of \cite[Theorem 3.2.9]{schneider2014convex}
	\item The space of phylogenetic trees \cite{billera2001geometry} is $\CAT(0)$. 
	
\end{itemize}

\end{document}